\documentclass[11pt]{article}

\usepackage[T1]{fontenc}
\usepackage[utf8]{inputenc}
\usepackage{lmodern}
\usepackage{microtype}
\usepackage{amsmath,amssymb,amsthm,mathtools}
\usepackage[hidelinks]{hyperref}
\usepackage[margin=1.12in]{geometry}

\newtheorem{theorem}{Theorem}[section]
\newtheorem{lemma}[theorem]{Lemma}
\newtheorem{proposition}[theorem]{Proposition}
\newtheorem{corollary}[theorem]{Corollary}
\theoremstyle{remark}

\newcommand{\T}{\mathbb T}
\newcommand{\R}{\mathbb R}
\newcommand{\Z}{\mathbb Z}
\newcommand{\e}{\mathrm e}
\newcommand{\dd}{\,\mathrm d}
\newcommand{\av}[1]{\langle #1\rangle}
\newcommand{\norm}[2][]{\left\lVert #2\right\rVert_{#1}}
\newcommand{\abs}[1]{\left|#1\right|}
\newcommand{\ip}[2]{\left\langle #1,#2\right\rangle}

\newcommand{\AuthorName}{Chenye Zhang}

\hypersetup{
  pdftitle={A Counterexample to a Bourgain--Brezis Sum-Space Estimate},
  pdfauthor={\AuthorName},
  pdfsubject={A counterexample separating ambient and gradient-constrained Sobolev decompositions}
}

\title{A Counterexample to a Bourgain--Brezis Sum-Space Estimate}
\author{Chenye Zhang\\
  \small School of Mathematical Sciences, Peking University, Beijing 100871, China\\
  \small \texttt{z\_cy@pku.edu.cn}}
\date{September 4, 2026}

\begin{document}
\maketitle

\begin{abstract}
A question of Bourgain and Brezis, later restated as Open Problem~7.1 by Brezis, asks whether the $H^{1/2}+W^{1,1}$ norm of a mean-zero scalar function is controlled by the ambient $H^{-1/2}+L^1$ sum norm of its gradient. We show that this estimate fails in every dimension $N\geq2$. More precisely, for every integer $K\geq16$, we construct a real-valued, smooth, mean-zero function $\Phi_K$ on $\T^N$ such that, for absolute constants $c,C>0$ independent of $N$ and $K$,
$$
  \norm[H^{-1/2}(\T^N)+L^1(\T^N)]{\nabla\Phi_K}\leq C,
  \qquad
  \norm[H^{1/2}(\T^N)+W^{1,1}(\T^N)]{\Phi_K}\geq cK^{1/4}-C.
$$
Equivalently, the ratio of the gradient-constrained decomposition cost to the ambient decomposition cost is unbounded. The construction applies the Hodge projection to a smooth approximation of a normal vector measure on a codimension-one sheet. Its potential records lacunary partial sums at geometrically separated distances from the sheet. A scale-path lower bound, combined with an odd critical Hardy inequality, excludes uniformly bounded $H^{1/2}+W^{1,1}$ decompositions.
\end{abstract}

\medskip
\noindent\textbf{Keywords.} Bourgain--Brezis estimate; Hodge projection; sum of Sobolev spaces; lacunary Fourier series; endpoint Hardy inequality.

\smallskip
\noindent\textbf{2020 Mathematics Subject Classification.} Primary 46E35; Secondary 35F05, 42B35, 46B70.

\section{Introduction}

\subsection{From endpoint divergence equations to phase estimates}

The point of departure is an endpoint phenomenon discovered by Bourgain and Brezis. Let $N\geq2$ and let $f\in L^N(\T^N)$ have mean zero. The canonical solution $Y_0=\nabla\Delta^{-1}f$ of $\operatorname{div}Y=f$ belongs to $W^{1,N}$ by Calder\'on--Zygmund theory, but the critical Sobolev embedding $W^{1,N}\hookrightarrow L^\infty$ is false. Nevertheless, Bourgain and Brezis proved that the freedom in this underdetermined equation can be used to choose a different solution satisfying
\begin{equation}
 \operatorname{div}Y=f,
 \qquad
 \norm[W^{1,N}(\T^N)]{Y}+\norm[L^\infty(\T^N)]{Y}\leq C_N\norm[L^N(\T^N)]{f}.
 \label{eq:BB-divergence}
\end{equation}
This is one of the basic Bourgain--Brezis estimates \cite{BB03}; its extensions to div--curl and Hodge systems were developed in \cite{BB04,BB07}, and the surrounding theory is surveyed in \cite{VanSchaftingen2014}. The result is nonlinear in an essential sense: one cannot obtain the bounded representative in \eqref{eq:BB-divergence} from a bounded linear right inverse of the divergence operator \cite{BB03}.

By duality, \eqref{eq:BB-divergence} is equivalent to an endpoint estimate for scalar functions. In the notation of \cite[Section~7]{Brezis2023},
\begin{equation}
 \norm[L^{N/(N-1)}(\T^N)]{\psi-\av{\psi}}
 \leq C_N\norm[W^{-1,N/(N-1)}(\T^N)+L^1(\T^N)]{\nabla\psi}.
 \label{eq:BB-dual}
\end{equation}
Since $W^{1,N}(\T^N)$ embeds continuously into $H^{1/2}(\T^N)$, one obtains the weaker Hilbertian estimate
\begin{equation}
 \norm[L^{N/(N-1)}(\T^N)]{\psi-\av{\psi}}
 \leq C_N\norm[H^{-1/2}(\T^N)+L^1(\T^N)]{\nabla\psi}.
 \label{eq:BB-dual-Half}
\end{equation}

The original Bourgain--Brezis paper used this mechanism to control phases of circle-valued maps. If $\varphi$ is real valued and $u=\e^{i\varphi}$, then $\nabla\varphi=-i\overline u\nabla u$. Testing against the bounded solution $Y$ and using the fractional product estimate gives
\begin{equation}
 \norm[L^{N/(N-1)}(\T^N)]{\varphi-\av{\varphi}}
 \leq C_N\bigl(\lvert u\rvert_{H^{1/2}}+\lvert u\rvert_{H^{1/2}}^2\bigr).
 \label{eq:phase-Lp}
\end{equation}
A stronger phase estimate is available. Bourgain--Brezis \cite[Theorem~4]{BB03} and Bourgain--Brezis--Mironescu \cite[Theorem~3]{BBM04} proved the sum-space bound
\begin{equation}
 \norm[H^{1/2}(\T^N)+W^{1,1}(\T^N)]{\varphi-\av{\varphi}}
 \leq C_N\bigl(\lvert u\rvert_{H^{1/2}}+\lvert u\rvert_{H^{1/2}}^2\bigr).
 \label{eq:phase-strong}
\end{equation}
This estimate belongs to the broader critical lifting theory for Sobolev maps into the circle; see \cite{BBM00,BBM04,BM21}. The target on the left of \eqref{eq:phase-strong} is natural because both $H^{1/2}$ and $W^{1,1}$ embed into $L^{N/(N-1)}$ on the torus, while the sum norm retains substantially more information than the Lebesgue norm in \eqref{eq:phase-Lp}.

\subsection{Brezis' question and the main result}

The estimate at issue was isolated by Bourgain and Brezis in \cite[(1.16)]{BB03}. After proving \eqref{eq:phase-strong} by a direct paraproduct argument, they asked whether it could instead be recovered from a divergence equation by the duality mechanism leading to \eqref{eq:phase-Lp}. Brezis later restated the question as equation~(7.6) and Open Problem~7.1 in \cite{Brezis2023}. Thus, on $\T^N=(\R/2\pi\Z)^N$, the problem is whether there exists $C_N$ such that every smooth real-valued function $\varphi$ satisfies
\begin{equation*}
 \norm[H^{1/2}(\T^N)+W^{1,1}(\T^N)]{\varphi-\av{\varphi}}
 \leq C_N
 \norm[H^{-1/2}(\T^N)+L^1(\T^N)]{\nabla\varphi}.
 \tag{OP}\label{eq:brezisOP}
\end{equation*}
Here and below $\av{\varphi}$ denotes the normalized integral of $\varphi$ over the torus. For compatible Banach spaces $X$ and $Y$, the sum norm is
$$
 \norm[X+Y]{F}
 :=\inf_{F=F_X+F_Y}
       \bigl(\norm[X]{F_X}+\norm[Y]{F_Y}\bigr).
$$
We use the inhomogeneous norms throughout; in particular,
$$
 \norm[W^{1,1}]{w}=\norm[L^1]{w}+\norm[L^1]{\nabla w}.
$$
The notation $\lvert u\rvert_{H^{1/2}}$ in \eqref{eq:phase-Lp} and \eqref{eq:phase-strong} denotes the homogeneous seminorm. For real-valued data, the scalar sum norms may be computed over real-valued decompositions: taking real parts is contractive in all the spaces involved. Thus \eqref{eq:brezisOP} is not an ordinary Sobolev embedding. Its right-hand side permits an arbitrary vector decomposition $\nabla\varphi=F_0+F_1$, with $F_0\in H^{-1/2}$ and $F_1\in L^1$. Its left-hand side asks for the more rigid decomposition $\varphi=h+w$, which after differentiation forces both $\nabla h$ and $\nabla w$ to be gradients. The problem is whether this gradient constraint can be imposed at a uniformly comparable cost.

\begin{theorem}[Negative answer to Open Problem 7.1]\label{thm:main}
There are constants $c,C>0$, independent of $N$ and $K$, such that for every $N\geq2$ and every integer $K\geq16$ there is a function $\Phi_K\in C^\infty(\T^N;\R)$ with $\av{\Phi_K}=0$ satisfying
\begin{align}
 \norm[H^{-1/2}+L^1]{\nabla\Phi_K}&\leq C,
 \label{eq:rhs-main}\\
 \norm[H^{1/2}+W^{1,1}]{\Phi_K}&\geq cK^{1/4}-C.
 \label{eq:lhs-main}
\end{align}
In particular, \eqref{eq:brezisOP} is false in every dimension $N\geq2$.
\end{theorem}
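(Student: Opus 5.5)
We follow the construction indicated in the abstract. First reduce to $N=2$ or, better, work directly in $\T^N$ with a codimension-one sheet $\Sigma=\{x_1=0\}$ (periodized, so there are really two sheets $x_1\in\{0,\pi\}$ with opposite signs to keep mean zero). All estimates should be uniform in $N$, so we arrange $\Phi_K$ to depend essentially on $x_1$ and one tangential variable $x_2$, with the remaining variables entering trivially. The natural object is a vector field $G_K=g_K(x')\,\delta_\Sigma\, e_1$ mollified at scale $2^{-K}$ in $x_1$, where $g_K(x')=K^{-1/2}\sum_{k=1}^{K}\cos(2^{k}x_2)$ is a lacunary sum. Then $\|g_K\|_{L^2}\sim 1$ and $\|g_K\|_{L^1}\sim1$. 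Set $\nabla\Phi_K:=P G_K$, where $P=\nabla\Delta^{-1}\operatorname{div}$ is the Hodge projection. This makes $\Phi_K=\Delta^{-1}\operatorname{div}G_K$ smooth, real and mean zero.

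For \eqref{eq:rhs-main}, write $\nabla\Phi_K=G_K-(I-P)G_K$. The first term has $L^1$ norm $\le\|g_K\|_{L^1}\lesssim1$. For the second term I will not bound it in $H^{-1/2}$ directly. Instead I will split the frequency-$2^k$ piece of $g_K$ according to the scale $2^{-k}$: the part of $(I-P)G_K$ at distance $\lesssim 2^{-k}$ from $\Sigma$ is in $L^1$, and the far part is in $H^{-1/2}$. The $H^{-1/2}$ norm of the part of a normal measure with tangential frequency $2^k$ and $x_1$-frequencies $\lesssim2^{k}$ is $\sim 2^{-k/2}\cdot 2^{k/2}K^{-1/2}$ per block. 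Orthogonality in $k$ then gives a sum $\sim (\sum_k K^{-1})^{1/2}\lesssim1$. Concretely, on the Fourier side $\widehat{G_K}(\xi)\propto \hat g_K(\xi')e_1$, and $|\xi|^{-1}$-weighted $L^2$ over $|\xi_1|\lesssim|\xi'|$ gives $\sum_k K^{-1}\int_{|\xi_1|\lesssim 2^k}|\xi|^{-1}\approx\sum_kK^{-1}$, which is $O(1)$. The high-$\xi_1$ remainder is exactly the normal measure minus its projection. It is handled by the explicit kernel: $(I-P)$ applied to $e^{i2^kx_2}\delta_{x_1=0}e_1$ is $e^{i2^kx_2}$ times an $x_1$-profile decaying like $e^{-2^k|x_1|}$, which is $L^1$-bounded by $2^{-k}$. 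This yields $O(1)$ after summation.

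For \eqref{eq:lhs-main}, compute $\Phi_K$ explicitly. Up to harmless terms, $\Phi_K(x_1,x_2)=\tfrac12\operatorname{sgn}(x_1)K^{-1/2}\sum_k e^{-2^k|x_1|}\cos(2^kx_2)$. At height $|x_1|\approx2^{-j}$ this approximates $\pm\tfrac12 K^{-1/2}S_j(x_2)$, with $S_j$ the $j$-th lacunary partial sum. Now suppose $\Phi_K=h+w$ with $\|h\|_{H^{1/2}}+\|w\|_{W^{1,1}}\le M$. The scale-path argument is as follows. For a.e. $x'$, integrate $\partial_1 w$ along the segment in $x_1$ from $2^{-K}$ to $1$. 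The $W^{1,1}$ bound controls $\int|w(2^{-j},x')-w(2^{-i},x')|dx'$ by $M$ for all $i,j$, uniformly. Consequently $h(2^{-j},\cdot)-h(2^{-i},\cdot)$ must reproduce $K^{-1/2}(S_j-S_i)$ up to an $L^1$ error $M$. By Zygmund, lacunary sums have $L^1\sim L^2$ norms, so $\|S_j-S_i\|_{L^1}\sim |j-i|^{1/2}$. To exclude this we need $h\in H^{1/2}$ to be unable to vary like that along dyadic heights. Here the odd structure enters: $\Phi_K$ is odd in $x_1$, so we may assume $h,w$ odd after symmetrizing. The critical odd Hardy inequality $\int |h(x_1,x')|^2\,dx/|x_1|\lesssim\|h\|_{H^{1/2}}^2$ (valid for odd $h$, with only a $\log$ failing in the even case) then gives $\sum_j\|h(2^{-j},\cdot)\|_{L^2}^2\lesssim M^2$ in averaged form over dyadic shells. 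Hence for most $j$, $\|h(2^{-j},\cdot)\|_{L^1}\lesssim M/\sqrt{K}$ on average. Combining, $K^{-1/2}\|S_j\|_{L^1}\approx K^{-1/2}j^{1/2}\lesssim M+\|h\|$ on the shell, and taking $j\sim K$ gives $M\gtrsim1$ only. To get $K^{1/4}$ one must instead average: pairing shells and using Cauchy--Schwarz on $\sum_j$ should yield $M^2\gtrsim K^{1/2}$. The precise bookkeeping that gives exactly $K^{1/4}-C$ is the main obstacle, together with controlling $\nabla w$ in $x'$-directions and the truncation errors $e^{-2^k|x_1|}\neq\mathbf 1_{k\le j}$. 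Summing these errors over $k$ geometrically should give the additive $-C$.
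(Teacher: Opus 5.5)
The decisive step of the lower bound is missing, and the information you extract from $w$ cannot supply it. Your ingredients match the paper's: Hodge projection of a normal lacunary sheet, odd symmetrization, the odd Hardy inequality on dyadic slabs, normal variation of $w$, and $L^1\simeq L^2$ for lacunary sums. Your expectation that the truncation errors are square-summable is also right (Proposition~\ref{prop:profile}).

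The problem is that you only keep $\|w(2^{-i},\cdot)-w(2^{-j},\cdot)\|_{L^1}\le M$ for all $i,j$, which is a bound on the $L^1$-diameter of the path. This is provably too weak. The path $f_j=m_j=K^{-1/2}\sum_{k\le j}\cos(2^kx_2)$ has $L^1$-diameter at most $\sup_{i,j}\|m_i-m_j\|_{L^2}\le 1$ and zero tracking error. So any argument using only diameter and tracking bounds stops at $M\gtrsim1$.

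What you must retain is the additive total variation. Let $f_j$ be the slab averages of $w$ over $I_j\simeq[2^{-j},2^{1-j}]$; averaging is forced anyway, since slices of $h\in H^{1/2}$ at fixed height are undefined. Then $A=\sum_j\|f_{j+1}-f_j\|_{L^1}\lesssim\|\partial_1 w\|_{L^1}$. Jensen plus the odd Hardy inequality give $B=(\sum_j\|m_j-f_j\|_{L^2}^2)^{1/2}\lesssim E+\|h\|_{H^{1/2}}$, where $E$ is the square-summed profile error. Now compare shells at an intermediate separation $L$. With $e_j=m_j-f_j$ and $j\le K-L$,
\[
 c\sqrt{L/K}\le\|m_{j+L}-m_j\|_{L^1}\le\|e_j\|_{L^1}+\|e_{j+L}\|_{L^1}+\sum_{r=j}^{j+L-1}\|f_{r+1}-f_r\|_{L^1}.
\]
Summing over $j$ counts each increment at most $L$ times, and Cauchy--Schwarz gives $\sum_j\|e_j\|_{L^1}\le\sqrt K\,B$. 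Hence $c\sqrt{KL}\lesssim LA+\sqrt K\,B$, and the choice $L\simeq\sqrt K$ yields $A+B\gtrsim K^{1/4}$. This is Lemma~\ref{lem:path}, the core of the paper. Separations $L\simeq1$, or comparisons of distant shells as in your $j\sim K$ attempt, each give only $M\gtrsim1$.

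The tangential derivative of $w$ is not a real obstacle. The paper inserts a tangential low-pass filter $S_j$ and pays for $S_{j+1}-S_j$ by the first moment of the kernel difference, $\lambda_j^{-1}\|\partial_xW_j\|_{L^1}$. This sums because $|I_j|=\lambda_j^{-1}$.

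Your upper bound also contains an error. The field $(I-P)(\e^{i2^kx_2}\delta_{\{x_1=0\}}e_1)$ has symbol of size $|\xi'|/|\xi|$ and $x_1$-profile $\simeq 2^k\e^{-2^k|x_1|}$. Its $L^1$ norm is therefore $\simeq1$, not $2^{-k}$; the latter is the size of the potential's profile $\e^{-2^k|x_1|}$. Summing $K$ blocks with coefficient $K^{-1/2}$ then gives $K^{1/2}$. This is not an artifact of the triangle inequality: $(I-P)G_K$ cannot be uniformly bounded in $L^1$, since otherwise $\nabla\Phi_K=G_K-(I-P)G_K$ would be, contradicting the lower bound.

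No splitting is needed. Your low-frequency computation used the bare sheet and omitted the Hodge factor $|Qe_1|^2=|\xi'|^2/|\xi|^2$. With that factor,
\[
 \|(I-P)G_K\|_{H^{-1/2}}^2\lesssim\sum_{n\ne0}|\widehat{g_K}(n)|^2\sum_{\xi_1\in\Z}\frac{n^2}{(n^2+\xi_1^2)^{3/2}}\lesssim\|g_K\|_{L^2}^2,
\]
uniformly in the mollification scale. The projection $Q$ removes exactly the logarithmic divergence of the bare sheet in the normal frequency. This is the paper's estimate \eqref{eq:Q-bound-start}--\eqref{eq:Q-bound}, with $G_K$ itself placed entirely in $L^1$.
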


For a compatible couple $(X_0,X_1)$, write
$$
 K(t,F;X_0,X_1)
 =\inf_{F=F_0+F_1}
   \bigl(\norm[X_0]{F_0}+t\norm[X_1]{F_1}\bigr).
$$
Let $\mathcal G_0\subset H^{-1/2}(\T^N;\R^N)$ and $\mathcal G_1\subset L^1(\T^N;\R^N)$ be, respectively, the gradients of mean-zero $H^{1/2}$ and $W^{1,1}$ functions, equipped with the norms inherited from the ambient spaces.

\begin{lemma}[Scalar and constrained decomposition costs]\label{lem:scalar-gradient-K}
For every mean-zero scalar distribution $\varphi$ on $\T^N$, with both sides understood as extended-valued quantities in $[0,\infty]$,
\begin{equation}
 \norm[H^{1/2}+W^{1,1}]{\varphi}
 \simeq_N K(1,\nabla\varphi;\mathcal G_0,\mathcal G_1).
 \label{eq:scalar-gradient-K}
\end{equation}
\end{lemma}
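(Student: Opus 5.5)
We prove the two inequalities in \eqref{eq:scalar-gradient-K} separately, using the obvious correspondence between scalar decompositions $\varphi=h+w$ and gradient decompositions $\nabla\varphi=\nabla h+\nabla w$. The substance lies in removing the constants and comparing the inhomogeneous norms with the seminorms of the gradients. Throughout, $\mathcal G_0$ and $\mathcal G_1$ are normed by $\norm[H^{-1/2}]{\cdot}$ and $\norm[L^1]{\cdot}$ respectively.

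\emph{Upper bound for $K$.} Let $\varphi=h+w$ with $h\in H^{1/2}$, $w\in W^{1,1}$, and $\varphi$ mean zero. Replace $h,w$ by $h-\av{h}$ and $w-\av{w}$. Since $\av{h}+\av{w}=\av{\varphi}=0$, these still sum to $\varphi$. Subtracting the mean does not increase either norm up to a factor $2$, because $\abs{\av{h}}\lesssim\norm[L^2]{h}$ and $\abs{\av{w}}\lesssim\norm[L^1]{w}$. Then $\nabla h\in\mathcal G_0$ and $\nabla w\in\mathcal G_1$. We also have $\norm[H^{-1/2}]{\nabla h}\lesssim\norm[H^{1/2}]{h}$ by Fourier multipliers, since $\abs{k}(1+\abs{k}^2)^{-1/4}\leq(1+\abs{k}^2)^{1/4}$, and $\norm[L^1]{\nabla w}\leq\norm[W^{1,1}]{w}$. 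Taking the infimum gives $K(1,\nabla\varphi;\mathcal G_0,\mathcal G_1)\lesssim\norm[H^{1/2}+W^{1,1}]{\varphi}$.

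\emph{Lower bound for $K$.} Take any splitting $\nabla\varphi=G_0+G_1$ with $G_0=\nabla h$, $h\in H^{1/2}$ mean zero, and $G_1=\nabla w$, $w\in W^{1,1}$ mean zero. Then $\nabla(\varphi-h-w)=0$, so $\varphi-h-w$ is a constant distribution. All three functions have mean zero, so this constant vanishes and $\varphi=h+w$. It remains to bound the full norms of $h$ and $w$ by the norms of their gradients:
\begin{itemize}
\item For $h$, the Fourier multiplier gives $\norm[H^{1/2}]{h}\lesssim\norm[H^{-1/2}]{\nabla h}$. For $k\neq0$ we have $(1+\abs{k}^2)^{1/4}\leq\sqrt2\,\abs{k}(1+\abs{k}^2)^{-1/4}$, because $1+\abs{k}^2\leq2\abs{k}^2$ when $\abs{k}\geq1$.
\item For $w$, the Poincaré inequality on the torus for mean-zero $W^{1,1}$ functions gives $\norm[L^1]{w}\leq C_N\norm[L^1]{\nabla w}$, hence $\norm[W^{1,1}]{w}\leq(1+C_N)\norm[L^1]{\nabla w}$.
\end{itemize}
Taking the infimum gives the reverse bound, with a constant depending on $N$ only through the Poincaré constant.

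\emph{Extended values and the main obstacle.} If one side is infinite, the corresponding admissible class is empty. The two constructions above show that each class is nonempty exactly when the other is, so both sides are simultaneously infinite. The only step requiring care is the Poincaré inequality at the endpoint $p=1$. It holds on the compact torus by the standard Rellich compactness argument, or directly from $\norm[L^{N/(N-1)}]{w}\lesssim\norm[L^1]{\nabla w}$ for mean-zero $w$. For distributions, the identification of $\mathcal G_1$ with gradients of $W^{1,1}$ functions is built into the definition, so no extra regularity argument is needed.
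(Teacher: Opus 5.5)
Your proof is correct and follows essentially the same route as the paper. You subtract means to turn a scalar decomposition into a constrained gradient decomposition, and for the converse you use that a mean-zero function with zero gradient vanishes, together with the Fourier comparison $\norm[H^{1/2}]{h}\simeq\norm[H^{-1/2}]{\nabla h}$ for mean-zero $h$ and the mean-zero $L^1$ Poincar\'e inequality; your explicit multiplier constants and your treatment of the extended-valued case only add detail to the paper's shorter argument.
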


\begin{proof}
If $\varphi=h+w$ with $h\in H^{1/2}$ and $w\in W^{1,1}$, subtracting the respective means preserves the identity because $\av{h}+\av{w}=\av{\varphi}=0$. The resulting functions $h_0,w_0$ have mean zero and satisfy
$$
 \norm[H^{1/2}]{h_0}\leq\norm[H^{1/2}]{h},
 \qquad
 \norm[W^{1,1}]{w_0}\lesssim_N\norm[W^{1,1}]{w}.
$$
Their gradients give a constrained decomposition of $\nabla\varphi$. Conversely, if
$\nabla\varphi=\nabla h+\nabla w$ with $h,w$ mean zero, then
$\varphi=h+w$: the difference is constant and has mean zero. Moreover,
$$
 \norm[H^{1/2}]{h}\simeq\norm[H^{-1/2}]{\nabla h},
 \qquad
 \norm[W^{1,1}]{w}\lesssim_N\norm[L^1]{\nabla w}
$$
by the Fourier characterization and the mean-zero $L^1$ Poincar\'e inequality. Taking the two infima proves \eqref{eq:scalar-gradient-K}.
\end{proof}

\begin{corollary}[Ambient-versus-constrained decomposition gap]\label{cor:K-gap}
For $F_K=\nabla\Phi_K$ from Theorem~\ref{thm:main},
\begin{align*}
 K(1,F_K;H^{-1/2},L^1)&\leq C_N,\\
 K(1,F_K;\mathcal G_0,\mathcal G_1)&\geq c_NK^{1/4}-C_N.
\end{align*}
In particular,
$$
 \frac{K(1,F_K;\mathcal G_0,\mathcal G_1)}
      {K(1,F_K;H^{-1/2},L^1)}
 \xrightarrow[K\to\infty]{}\infty.
$$
\end{corollary}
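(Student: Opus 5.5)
The corollary is a direct translation of Theorem~\ref{thm:main} into $K$-functional language, with Lemma~\ref{lem:scalar-gradient-K} as the bridge. The plan is to treat the two inequalities separately and then combine them for the ratio.

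For the first bound, observe that $K(1,F;H^{-1/2},L^1)$ is by definition exactly the sum norm $\norm[H^{-1/2}+L^1]{F}$. Applying this with $F=F_K=\nabla\Phi_K$, estimate \eqref{eq:rhs-main} gives
$$
 K(1,F_K;H^{-1/2},L^1)\leq C.
$$
This holds with an absolute constant, so a fortiori with some $C_N$.

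For the second bound, $\Phi_K$ is smooth, real-valued and has mean zero, so Lemma~\ref{lem:scalar-gradient-K} applies with $\varphi=\Phi_K$. Its two-sided comparability supplies a constant $A_N\geq1$ with
$$
 \norm[H^{1/2}+W^{1,1}]{\Phi_K}\leq A_N\,K(1,F_K;\mathcal G_0,\mathcal G_1).
$$
Combining this with \eqref{eq:lhs-main} yields
$$
 K(1,F_K;\mathcal G_0,\mathcal G_1)\geq A_N^{-1}\bigl(cK^{1/4}-C\bigr).
$$
We then set $c_N=c/A_N$ and $C_N=\max(C,C/A_N)$. The only point needing a moment's care is the direction of the comparison constant: we must use the inequality ``scalar norm $\lesssim_N$ constrained $K$-functional''. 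This is the harder direction in the lemma, since it relies on the mean-zero Poincaré inequality and the identity $\norm[H^{1/2}]{h}\simeq\norm[H^{-1/2}]{\nabla h}$ to pass from a gradient decomposition back to a scalar decomposition. Since the lemma is stated with $\simeq_N$, both directions are available.

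Finally, the ratio satisfies
$$
 \frac{K(1,F_K;\mathcal G_0,\mathcal G_1)}{K(1,F_K;H^{-1/2},L^1)}
 \geq \frac{c_NK^{1/4}-C_N}{C_N}
 \xrightarrow[K\to\infty]{}\infty .
$$
Here the denominator is strictly positive: since $c_NK^{1/4}-C_N>0$ for large $K$, the function $\Phi_K$ is nonzero, so $F_K\neq0$ and its ambient sum norm is positive. I expect no genuine obstacle, since all analytic content resides in Theorem~\ref{thm:main} and Lemma~\ref{lem:scalar-gradient-K}.
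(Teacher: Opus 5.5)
Your proposal is correct and follows the paper's own proof. There, the first bound is \eqref{eq:rhs-main} read as the $t=1$ $K$-functional, and the second comes from \eqref{eq:lhs-main} combined with the direction $\norm[H^{1/2}+W^{1,1}]{\varphi}\lesssim_N K(1,\nabla\varphi;\mathcal G_0,\mathcal G_1)$ of Lemma~\ref{lem:scalar-gradient-K}; your explicit bookkeeping of $c_N,C_N$ and the positivity check on the denominator simply make explicit what the paper leaves implicit.
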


\begin{proof}
The first estimate is \eqref{eq:rhs-main}; the second follows from \eqref{eq:lhs-main} and Lemma~\ref{lem:scalar-gradient-K}.
\end{proof}

Thus the obstruction persists for smooth data in every dimension $N\geq2$, and the gap arises when both summands are required to lie in the gradient subspaces. The construction yields the exponent $1/4$; its optimality is not addressed. Theorem~\ref{thm:main} is compatible with the nonlinear phase estimate \eqref{eq:phase-strong}, since the construction provides no uniform bound for $\lvert\e^{i\Phi_K}\rvert_{H^{1/2}}$. Consequently, the theorem rules out this linear duality route to \eqref{eq:phase-strong}, rather than the phase estimate itself.

\subsection{Position relative to nearby results}

The dimension-dependent critical Sobolev scale has a separate positive theory. Maz'ya proved a Hilbertian bounded-representative theorem at regularity $H^{N/2}$ \cite{Mazya2007}. In dimension two, Mironescu placed the Maz'ya and Bourgain--Brezis estimates in a common framework and discussed higher-dimensional analogues \cite{Mironescu2010}. Bousquet--Mironescu--Russ considered $\operatorname{div}Y=f$ with $f\in W^{s,p}$, $(s+1)p=N$, and $p\geq2$, and constructed solutions in $W^{s+1,p}\cap L^\infty$; they also treated Triebel--Lizorkin scales \cite{BMR13}. Bousquet--Russ--Wang--Yung extended the bounded-selection principle to critical higher-order and fractional Hodge systems in $\dot W^{s,p}$, $sp=N$, for a range of form degrees \cite{BRWY19}.

For the statements below, $L^1_0(\R^N;\Lambda^\ell)$ denotes the coefficientwise mean-zero subspace of $L^1(\R^N;\Lambda^\ell)$, and $d[X]$ denotes the image under the exterior derivative of a space $X$ of differential forms, equipped with the induced quotient norm
$$
 \norm[d[X]]{\eta}:=\inf\bigl\{\norm[X]{u}:u\in X,\ du=\eta\bigr\}.
$$
In recent preprints on $\R^N$, Ars\'enio proved
$$
 d[\dot H^{N/2}\Lambda^\ell]
 =d[(\dot H^{N/2}\cap L^\infty)\Lambda^\ell],
 \qquad 1\leq\ell\leq N-1,
$$
with equivalent induced norms \cite[Theorem~1.3]{Arsenio2026}, while Dai--Deng--Shi--Wu--Yang proved the corresponding $\dot W^{N/p,p}$ selection theorem for every $1<p<\infty$ and every $1\leq\ell\leq N-1$ \cite[Theorem~1.1]{DaiEtAl2026}. At the Hilbert point $p=2$, both the primal regularity $\dot H^{N/2}$ and the dual sum scale $\dot H^{-N/2}+L^1$ depend on the dimension. More precisely, for $\omega\in L^1_0(\R^N;\Lambda^\ell)$, Ars\'enio's dual estimate is
$$
 \norm[\dot H^{-N/2}(\R^N;\Lambda^\ell)]{\omega}
 \lesssim \norm[L^1(\R^N;\Lambda^\ell)]{\omega}
          +\norm[\dot H^{-N/2}(\R^N;\Lambda^\ell)]{P\omega}.
$$
By contrast, \eqref{eq:brezisOP} is an inhomogeneous torus estimate for a single exact vector field in $H^{-1/2}+L^1$, with exponent $-1/2$ independent of $N$.

Da Lio, Rivi\`ere and Wettstein proved a one-dimensional fractional Bourgain--Brezis estimate at the exponent $-1/2$ \cite[Theorem~3]{DLRW2021}. On $S^1$, their theorem gives
$$
 \norm[L^2(S^1)]{u-\av{u}}
 \lesssim
 \norm[\dot H^{-1/2}(S^1)+L^1(S^1)]{(-\Delta)^{1/4}u}
 +\norm[\dot H^{-1/2}(S^1)+L^1(S^1)]{\mathcal R(-\Delta)^{1/4}u},
$$
and they identify it with a characterization of the Bergman space $\mathcal A^2(\mathbb D)$ through uniform radial $L^1+\dot H^{-1/2}$ boundary norms. Their higher-dimensional theorem on $\T^N$ controls the full Riesz-conjugate family of $(-\Delta)^{N/4}u$ in $L^1+\dot H^{-N/2}$ and again concludes an $L^2$ estimate \cite[Theorem~4]{DLRW2021}. Thus their one-dimensional result differs from \eqref{eq:brezisOP} in the controlled object and in the target: it assumes bounds for a conjugate pair and concludes $L^2$ control, whereas \eqref{eq:brezisOP} assumes one ambient decomposition of the exact vector field $\nabla\varphi$ and asks for a scalar $H^{1/2}+W^{1,1}$ decomposition. Their higher-dimensional result also uses the dimension-dependent exponent $-N/2$.

Equation~\eqref{eq:brezisOP} compares the constrained and ambient $K$-functionals at the single parameter $t=1$. Full $K$-closedness requires a uniform comparison for every $t>0$ \cite{BerghLofstrom1976}. In another recent preprint, Curc\u{a} asks for such a full-parameter comparison for the different torus couple $(G(L^1),G(H^{-N/2}))$ \cite[Section~3.1, (35)]{Curca2026}. Curc\u{a}'s earlier work \cite{Curca2025} characterizes scalar Sobolev sum embeddings and constructs counterexamples by trace and fractal-set methods. The scale-path obstruction below instead concerns decompositions whose summands are constrained to lie in the gradient subspaces, and is tailored to the profile generated by the Hodge projection of a normal sheet.

\paragraph{Novelty of the construction.}
The proof combines standard Hodge projections and periodic layer potentials, classical fourth-moment estimates for lacunary trigonometric series \cite{Zygmund2002}, and an endpoint odd Hardy inequality related to $H^{1/2}_{00}$ \cite{LionsMagenes1972}. The new ingredient is the normal-sheet-to-scale-path reduction: Proposition~\ref{prop:profile} encodes lacunary partial sums in normal slices, while Proposition~\ref{prop:compression} converts every scalar decomposition into a discrete path with controlled $L^1$ variation and square-summable $L^2$ tracking error. Lemma~\ref{lem:path} then yields the unbounded constrained-to-ambient gap.

Section~\ref{sec:heuristic} explains the mechanism, Sections~\ref{sec:construction}--\ref{sec:two-dimensional-proof} prove the two-dimensional estimates, and Section~\ref{sec:higher-dimensions} extends them to $N>2$.

\section{The mechanism behind the counterexample}
\label{sec:heuristic}

\subsection{Creating a gap with the Hodge projection}

The decomposition asymmetry in \eqref{eq:brezisOP} suggests beginning on the vector-field side. Let $P$ be the $L^2$ Hodge projection onto gradient fields and let $Q=I-P$. For any vector field $a$ for which the following terms make sense, the gradient field $F=2Pa$ has the ambient decomposition
\begin{equation}
 F=2Pa=2a-2Qa.
 \label{eq:heuristic-hodge}
\end{equation}
Consequently $F$ has small $H^{-1/2}+L^1$ norm whenever $a$ has small $L^1$ norm and $Qa$ has small $H^{-1/2}$ norm. Neither $a$ nor $Qa$ is generally a gradient, so \eqref{eq:heuristic-hodge} need not be a constrained decomposition. We choose $a$ so that every scalar $H^{1/2}+W^{1,1}$ decomposition of the potential associated with $Pa$ has large cost.

Why should $a$ concentrate near a hypersurface? Work provisionally on $\T^2$ with tangential coordinate $x$ and normal coordinate $t$, and ignore normalization constants. For a mean-zero tangential datum $b$, consider the ideal normal sheet
$$
 a=b(x)e_t\delta_{\{t=0\}},
$$
At frequency $(n,\ell)$, the Hodge multiplier satisfies
$$
 \abs{Q(n,\ell)e_t}^2=\frac{n^2}{n^2+\ell^2}.
$$
Thus the normal projection supplies exactly the cancellation missing from the bare sheet:
\begin{equation}
 \norm[H^{-1/2}]{Qa}^2
 \lesssim \sum_{n\ne0}\abs{\widehat b(n)}^2\sum_{\ell\in\Z}\frac{n^2}{(n^2+\ell^2)^{3/2}}
 \lesssim \norm[L^2]{b}^2.
 \label{eq:heuristic-sheet-cancellation}
\end{equation}
Without the factor $n^2/(n^2+\ell^2)$, the normal-frequency sum is logarithmically divergent. This suggests a normal codimension-one concentration: it is compatible with the $H^{-1/2}$ endpoint after the coexact Hodge component is removed. Although the ideal sheet is a measure rather than an $L^1$ function, its normal mollifications have uniformly bounded $L^1$ norm; this is the regularization used in the proof.

\subsection{Turning normal distance into a frequency parameter}

Let $\Phi=2\Delta^{-1}\operatorname{div}a$, so that $\nabla\Phi=2Pa$. The potential generated by one tangential mode $\cos(nx)$ has, on the positive side of the sheet, the form
$$
 A_n(t)\cos(nx),
 \qquad
 A_n(t)=\frac{\sinh(n(\pi-t))}{\sinh(n\pi)}.
$$
For $n\geq1$,
$$
 A_n(t)=\e^{-nt}
 \frac{1-\e^{-2n(\pi-t)}}{1-\e^{-2n\pi}}
 \asymp_a \e^{-nt},
 \qquad 0<t\leq a<\pi.
$$
In particular $A_n(0+)=1$, while the mode has essentially disappeared once $t$ is appreciably larger than $n^{-1}$. Normal distance therefore acts as an inverse frequency cutoff.

This observation suggests choosing a lacunary tangential datum
$$
 b_K(x)=K^{-1/2}\sum_{i=1}^K\cos(\lambda_i x),
 \qquad \lambda_i=R^{M+i}.
$$
The factor $K^{-1/2}$ keeps $\norm[L^2]{b_K}$, and hence both costs in \eqref{eq:heuristic-hodge}, uniformly bounded. At distance $t\simeq\lambda_j^{-1}$, the modes with $i\ll j$ remain close to their boundary values, only a bounded number of transition modes have an error of order one, and the modes with $i>j$ lie beyond their natural decay scale. A tangential low-pass multiplier $S_j$ separates these regimes sharply. The averaged potential on the slab $I_j=[\lambda_j^{-1},2\lambda_j^{-1}]$ then satisfies, in a square-summed sense made precise in Proposition~\ref{prop:profile},
$$
 S_j\frac1{|I_j|}\int_{I_j}\Phi_K(\cdot,t)\dd t\approx m_j,
 \qquad
 m_j(x)=K^{-1/2}\sum_{i=1}^j\cos(\lambda_i x).
$$
The potential has encoded the ordered list of lacunary partial sums as one moves toward the sheet.

\subsection{The obstruction to scalar decompositions}

Suppose $\Phi_K=w+h$ with $w\in W^{1,1}$ and $h\in H^{1/2}$. Average on the slabs and set $f_j=S_j|I_j|^{-1}\int_{I_j}w(\cdot,t)\dd t$. The $t$-variation of $w$, together with the first-moment estimate for the change from $S_j$ to $S_{j+1}$, controls the total path variation
$$
 A:=\sum_{j=1}^{K-1}\norm[L^1]{f_{j+1}-f_j}
 \lesssim \norm[L^1]{\nabla w}.
$$
The sheet potential is odd in $t$. After taking odd parts, the critical Hardy inequality controls the square-summed slab averages of $h$ and hence the tracking error
$$
 B:=\left(\sum_{j=1}^K\norm[L^2]{m_j-f_j}^2\right)^{1/2}
 \lesssim 1+\norm[H^{1/2}]{h}.
$$
Oddness is essential here: at the borderline exponent $H^{1/2}$ there is no general point trace, but oddness supplies the cancellation at the sheet that makes the weighted integral $\int_0^a\norm[L^2_x]{h(\cdot,t)}^2\dd t/t$ finite.

It remains to see why the path $m_j$ cannot simultaneously have small variation and be followed with small square error. Lacunarity gives
$$
 \norm[L^1]{m_{j+L}-m_j}\gtrsim\sqrt{\frac{L}{K}}.
$$
Comparing $m_{j+L}-m_j$ with $f_{j+L}-f_j$ and summing in $j$ yields the schematic inequality
$$
 \sqrt{KL}\lesssim LA+\sqrt K\,B.
$$
The choice $L\simeq\sqrt K$ gives $A+B\gtrsim K^{1/4}$ and disproves \eqref{eq:brezisOP}. Finally, replacing the ideal sheet by a mollification at width $\varepsilon_K\ll\lambda_K^{-1}$ makes $\Phi_K$ smooth without changing any of the sampled profiles by more than a uniformly square-summable error.

\section{Hodge reformulation and the counterexample}
\label{sec:construction}

\subsection{Normalizations and the Hodge projection}

We use normalized Haar measure
$$
 dm_d(x)=(2\pi)^{-d}\dd x
$$
on $\T^d$.  All $L^p$ norms are taken with respect to $dm_d$, and
\begin{equation}
 \widehat f(k)=\int_{\T^d}f(x)\e^{-ik\cdot x}\,dm_d(x),
 \qquad
 f(x)=\sum_{k\in\Z^d}\widehat f(k)\e^{ik\cdot x}.
 \label{eq:fourier-convention}
\end{equation}
Functions are embedded into distributions by integration against $dm_d$. Thus the Haar-normalized Dirac mass $\delta_0$, defined by $\langle\delta_0,\psi\rangle=\psi(0)$, has Fourier coefficients equal to one.  Convolution is normalized by
\begin{equation}
 (f*g)(x)=\int_{\T^d}f(x-y)g(y)\,dm_d(y),
 \qquad \widehat{f*g}(k)=\widehat f(k)\widehat g(k).
 \label{eq:convolution-convention}
\end{equation}
We write $*_t$ for convolution only in the last variable.  Interval averages use coordinate length: $|I|^{-1}\int_I f(t)\dd t$.

For $s\in\R$, set
$$
 \norm[H^s(\T^d)]{f}^2
  =\sum_{\xi\in\Z^d}(1+\abs{\xi}^2)^s
     \abs{\widehat f(\xi)}^2.
$$
Vector-valued norms use the Euclidean norm. In particular, for $F:\T^d\to\R^d$,
$$
 \norm[L^1(\T^d;\R^d)]{F}
 =\int_{\T^d}\abs{F(x)}_{\ell^2}\,dm_d(x),
 \qquad
 \norm[H^s(\T^d;\R^d)]{F}^2
 =\sum_{\xi\in\Z^d}(1+\abs{\xi}^2)^s
   \abs{\widehat F(\xi)}_{\ell^2}^2.
$$
All implicit constants below are absolute unless a dependence is displayed explicitly.

Let $P$ be the Hodge projection onto gradient fields and $Q=I-P$.  At nonzero frequencies,
\begin{equation}
 \widehat{PF}(\xi)=\frac{\xi\otimes\xi}{\abs{\xi}^2}\widehat F(\xi),
 \qquad
 \widehat{QF}(\xi)
 =\left(I-\frac{\xi\otimes\xi}{\abs{\xi}^2}\right)\widehat F(\xi).
 \label{eq:hodge-symbol}
\end{equation}
At zero frequency we set
$$
 \widehat{PF}(0)=0,
 \qquad
 \widehat{QF}(0)=\widehat F(0).
$$
For a mean-zero scalar distribution $\phi$, $\nabla\phi\in H^{-1/2}$ if and only if $\phi\in H^{1/2}$, with equivalent norms. Likewise, if $\nabla\phi\in L^1$ distributionally, then $\phi\in W^{1,1}$ and $\norm[W^{1,1}]{\phi}\lesssim\norm[L^1]{\nabla\phi}$. To justify the latter statement without assuming a priori that $\phi$ is a function, convolve $\phi$ with a smooth approximate identity and apply the mean-zero $L^1$ Poincar\'e inequality. The regularizations are uniformly bounded in $BV$; compactness gives an $L^1$-convergent subsequence, and distributional convergence identifies its limit with $\phi$, whose weak derivatives are the prescribed $L^1$ functions.

Together with Lemma~\ref{lem:scalar-gradient-K}, these observations identify \eqref{eq:brezisOP}, up to Poincar\'e constants, with the comparison of constrained and ambient costs in Corollary~\ref{cor:K-gap}.

\subsection{Definition of the sequence}

Fix
$$
 R=4,
 \qquad a=\frac{\pi}{8},
 \qquad M=1.
$$
Then $3R^{-(M+1)}<a$.
For every integer $K\geq16$, put
$$
 \lambda_i=R^{M+i},
 \qquad
 b_K(x)=K^{-1/2}\sum_{i=1}^K\cos(\lambda_i x).
$$
In particular, every $\lambda_i$ is an integer. Work first on $\T^2$ with coordinates $(x,t)$. Let $e_t=(0,1)$. Choose an even function $\eta\in C_c^\infty((-1,1))$ with $\eta\geq0$ and $\int_\R\eta(s)\dd s=1$, and let $\rho_\varepsilon$ be the $2\pi$-periodization of
$$
 \frac{2\pi}{\varepsilon}\eta(t/\varepsilon).
$$
Then $\rho_\varepsilon$ is supported in $(-\varepsilon,\varepsilon)$ modulo $2\pi$ and $\int_\T\rho_\varepsilon\,dm_1=1$. Define
\begin{equation}
 \varepsilon_K=K^{-2}\lambda_K^{-1},
 \qquad
 a_K(x,t)=\frac1{2\pi}b_K(x)e_t\rho_{\varepsilon_K}(t),
 \label{eq:aK}
\end{equation}
and, using the mean-zero inverse of the Laplacian,
\begin{equation}
 \Phi_K=2\Delta^{-1}\operatorname{div}a_K.
 \label{eq:PhiK}
\end{equation}
For $n\neq0$ and $\ell\in\Z$, the definitions give
\begin{align}
 \widehat{a_K}(n,\ell)
 &=\frac1{2\pi}\widehat b_K(n)
   \widehat{\rho_{\varepsilon_K}}(\ell)e_t,
 \label{eq:aK-fourier}\\
 \widehat{\Phi_K}(n,\ell)
 &=-\frac{2i\ell}{2\pi(n^2+\ell^2)}
   \widehat b_K(n)
   \widehat{\rho_{\varepsilon_K}}(\ell).
 \label{eq:PhiK-fourier}
\end{align}
Thus $\Phi_K$ is smooth, real-valued, mean-zero, and odd in $t$. Direct multiplication by $i(n,\ell)$ yields
\begin{equation}
 \nabla\Phi_K=2Pa_K=2a_K-2Qa_K.
 \label{eq:ambient-decomp-preview}
\end{equation}
This identity exhibits the ambient decomposition used for the upper bound. The proof of Theorem \ref{thm:main} is reduced to the two estimates
\begin{equation}
 \norm[H^{-1/2}+L^1]{\nabla\Phi_K}\lesssim1,
 \qquad
 \norm[H^{1/2}+W^{1,1}]{\Phi_K}\gtrsim K^{1/4}-1.
 \label{eq:two-targets}
\end{equation}
The first is a direct Fourier estimate for $Qa_K$; the second occupies the next two sections.

\section{The scale-path lower bound}
\label{sec:scale-path}

For $1\leq j\leq K$, define
\begin{equation}
 m_j(x)=K^{-1/2}\sum_{i=1}^j\cos(\lambda_i x).
 \label{eq:mj}
\end{equation}

\subsection{Lacunary blocks and paths}

The quasi-independence and moment bounds of lacunary trigonometric systems are classical; see, for example, \cite[Chapter~V]{Zygmund2002}. We include the short fourth-moment argument needed here.

\begin{lemma}[Lacunary block lower bound]\label{lem:lacunary-l1}
There is an absolute constant $c>0$ such that, whenever $1\leq L\leq K-j$,
\begin{equation}
 \norm[L^1(\T)]{m_{j+L}-m_j}
 \geq c\sqrt{\frac{L}{K}}.
 \label{eq:lacunary-block}
\end{equation}
\end{lemma}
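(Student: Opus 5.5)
The plan is the standard Hölder/fourth-moment interpolation. Set $g=m_{j+L}-m_j=K^{-1/2}\sum_{i=j+1}^{j+L}\cos(\lambda_i x)$. By Hölder's inequality with respect to the probability measure $dm_1$, writing $\abs{g}^2=\abs{g}^{2/3}\abs{g}^{4/3}$ and applying Hölder with exponents $3/2$ and $3$,
$$
 \norm[L^2]{g}^2\leq\norm[L^1]{g}^{2/3}\norm[L^4]{g}^{4/3},
 \qquad\text{hence}\qquad
 \norm[L^1]{g}\geq\frac{\norm[L^2]{g}^3}{\norm[L^4]{g}^2}.
$$
It therefore suffices to show that $\norm[L^2]{g}^2=L/(2K)$ and $\norm[L^4]{g}^4\leq C L^2/K^2$ for an absolute constant $C$. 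Then $\norm[L^1]{g}\geq (L/2K)^{3/2}(CL^2/K^2)^{-1/2}=c\sqrt{L/K}$.

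The $L^2$ identity is Parseval. The frequencies $\lambda_i=4^{1+i}$ are distinct positive integers, and each cosine contributes mass $1/2$ in the normalized measure, which gives $\norm[L^2]{g}^2=K^{-1}\cdot L/2$.

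For the fourth moment, I expand $\cos(\lambda x)=\tfrac12(\e^{i\lambda x}+\e^{-i\lambda x})$. This gives
$$
 \norm[L^4]{g}^4=\frac{1}{16K^2}\#\{(\epsilon_1,\dots,\epsilon_4,i_1,\dots,i_4):\ \textstyle\sum_r\epsilon_r\lambda_{i_r}=0\},
$$
with signs $\epsilon_r=\pm1$ and indices $i_r\in\{j+1,\dots,j+L\}$. This is the main step: I need to show the count is $O(L^2)$. If the zero-sum relation holds, some sign must be opposite to the others, so the relation can be rearranged as $\lambda_a+\lambda_b=\lambda_c+\lambda_d$ (two plus, two minus) or as $\lambda_a=\lambda_b+\lambda_c+\lambda_d$ (one against three).

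In the three-versus-one case, let $m=\max(b,c,d)$. Then $\lambda_b+\lambda_c+\lambda_d$ lies in $(\lambda_m,3\lambda_m]$. This interval contains no power of $4$, since the next power is $4\lambda_m>3\lambda_m$ and $\lambda_a=\lambda_m$ is strictly smaller than the sum. So this case is impossible.

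In the two-versus-two case, assume $a\geq b$ and $c\geq d$. If $a>c$, then $\lambda_a\geq4\lambda_c>\lambda_c+\lambda_d$, a contradiction, so $a=c$ and therefore $b=d$. Ordered solutions thus satisfy $\{a,b\}=\{c,d\}$ as multisets, giving at most $2L^2$ tuples. Multiplying by the bounded number of sign and position patterns gives a count of at most $C'L^2$.

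I expect the only care needed to be in this bookkeeping of signs, which uses $R=4\geq4$ to rule out coincidences. The rest is routine.
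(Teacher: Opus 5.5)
Your proposal is correct and follows essentially the paper's argument: the H\"older interpolation $\norm[L^1]{g}\geq\norm[L^2]{g}^3/\norm[L^4]{g}^2$, Parseval for the second moment, and a count of signed zero-sum relations for the fourth moment, using $R=4>3$ and arriving at the same bound $12L^2$. The only cosmetic difference is that you classify relations by sign pattern (one against three versus two against two), whereas the paper repeatedly cancels opposite-sign pairs at the maximal frequency.
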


\begin{proof}
Put $g=\sum_{i=j+1}^{j+L}\cos(\lambda_i x)$.  Orthogonality gives $\norm[L^2]{g}^2=L/2$.  We claim that
\begin{equation}
 \norm[L^4]{g}^4\leq CL^2.
 \label{eq:l4-lacunary}
\end{equation}
After expanding into exponentials, a constant term corresponds to a relation
$$
 \varepsilon_1\lambda_{i_1}+\cdots+
 \varepsilon_4\lambda_{i_4}=0,
 \qquad \varepsilon_\nu\in\{-1,1\}.
$$
Let $m=\max_\nu i_\nu$ and cancel every opposite-sign pair among the occurrences of $\lambda_m$. If an occurrence remains, its net contribution has absolute value at least $\lambda_m$, while the at most three remaining lower-frequency terms have total absolute value at most $3\lambda_{m-1}=3\lambda_m/R<\lambda_m$, a contradiction. All occurrences of the maximal frequency therefore cancel in opposite-sign pairs. Repeating the argument at the next remaining frequency shows that every relation is a union of two opposite-sign equal-frequency pairs, including relations with repeated maximal indices. There are three pairings of the four positions; after a pairing is fixed, the two frequency indices and the two signs can be chosen in at most $4L^2$ ways. Thus the number of ordered signed relations is at most $12L^2$, which proves \eqref{eq:l4-lacunary}. Interpolation yields
$$
 \norm[L^2]{g}
 \leq \norm[L^1]{g}^{1/3}\norm[L^4]{g}^{2/3},
 \qquad
 \norm[L^1]{g}\geq
 \frac{\norm[L^2]{g}^3}{\norm[L^4]{g}^2}\gtrsim\sqrt L.
$$
Division by $\sqrt K$ proves the lemma.
\end{proof}

\begin{lemma}[Scale-path obstruction]\label{lem:path}
For arbitrary $f_1,\ldots,f_K\in L^2(\T)$ set
$$
 A=\sum_{j=1}^{K-1}\norm[L^1]{f_{j+1}-f_j},
 \qquad
 B=\left(\sum_{j=1}^K\norm[L^2]{m_j-f_j}^2\right)^{1/2}.
$$
If $K\geq16$, then
\begin{equation}
 A+B\geq cK^{1/4}.
 \label{eq:path-obstruction}
\end{equation}
\end{lemma}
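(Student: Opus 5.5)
We prove Lemma~\ref{lem:path} by comparing increments of the path $(f_j)$ with increments of $(m_j)$ over blocks of length $L$, using Lemma~\ref{lem:lacunary-l1} for the lower bound. Set $L=\lfloor\sqrt K\rfloor$; since $K\geq16$ we have $L\geq4$ and $L\leq K/4$. Fix any $j$ with $1\leq j\leq K-L$.

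First we split the block increment by the triangle inequality,
$$
 \norm[L^1]{m_{j+L}-m_j}
 \leq \norm[L^1]{f_{j+L}-f_j}
 +\norm[L^1]{m_{j+L}-f_{j+L}}
 +\norm[L^1]{m_j-f_j}.
$$
The $L^1$ norm is bounded by the $L^2$ norm for the probability measure $dm_1$, and the first term is at most $\sum_{i=j}^{j+L-1}\norm[L^1]{f_{i+1}-f_i}$. Combining this with Lemma~\ref{lem:lacunary-l1} gives, for each such $j$,
$$
 c\sqrt{L/K}\leq \sum_{i=j}^{j+L-1}\norm[L^1]{f_{i+1}-f_i}
 +\norm[L^2]{m_{j+L}-f_{j+L}}+\norm[L^2]{m_j-f_j}.
$$

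Next we sum this inequality over $j=1,\dots,K-L$; this summation is the step needing care. On the left we get $(K-L)c\sqrt{L/K}\geq \tfrac{c}{2}\sqrt{KL}$. Each increment $\norm[L^1]{f_{i+1}-f_i}$ lies in at most $L$ of the windows, so the first term contributes at most $LA$. For the error terms, Cauchy--Schwarz gives $\sum_j\norm[L^2]{m_j-f_j}\leq\sqrt K\,B$, and the shifted sum obeys the same bound. Hence
$$
 \tfrac c2\sqrt{KL}\leq LA+2\sqrt K\,B .
$$

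Finally we divide by $\sqrt{KL}$ to get
$$
 \tfrac c2\leq \sqrt{L/K}\,A+\frac{2}{\sqrt L}\,B .
$$
With $L\simeq\sqrt K$, both coefficients $\sqrt{L/K}$ and $2/\sqrt L$ are $\lesssim K^{-1/4}$. Therefore $A+B\gtrsim K^{1/4}$, which is \eqref{eq:path-obstruction}. The only delicate point is choosing $L$ to balance the $L$-fold overcounting of the variation $A$ against the $\sqrt{K}$ loss from Cauchy--Schwarz on the tracking error $B$. The condition $K\geq16$ ensures $1\leq L\leq K/4$, so Lemma~\ref{lem:lacunary-l1} applies to every window and $K-L\geq K/2$ absolute-constant bookkeeping is harmless.
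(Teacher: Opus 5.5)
Your proposal is correct and follows the paper's proof essentially step for step: the same triangle-inequality comparison over windows of length $L$, the same $L$-fold overcounting of the increments, Cauchy--Schwarz on the tracking errors, and the choice $L=\lfloor\sqrt K\rfloor$. Your explicit bookkeeping of the factor $K-L\geq K/2$ and of $L\leq K/4$ simply makes precise what the paper absorbs into the constant $c$.
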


\begin{proof}
Let $e_j=m_j-f_j$ and choose $1\leq L\leq K/2$.  For $1\leq j\leq K-L$, Lemma \ref{lem:lacunary-l1} and the triangle inequality give
$$
 c\sqrt{L/K}
 \leq \norm[L^1]{e_{j+L}}+\norm[L^1]{e_j}
       +\sum_{r=j}^{j+L-1}\norm[L^1]{f_{r+1}-f_r}.
$$
After summing in $j$, every nearest-neighbor increment occurs at most $L$ times.  Since $dm_1(\T)=1$,
$$
 c\sqrt{KL}
 \leq LA+2\sum_{j=1}^K\norm[L^1]{e_j}
 \leq LA+2\sqrt K B.
$$
Taking $L=\lfloor\sqrt K\rfloor$ and dividing by $\sqrt K$ proves \eqref{eq:path-obstruction}.
\end{proof}

\subsection{An odd critical Hardy inequality}

The weight $t^{-1}$ is the endpoint zero-boundary phenomenon associated with the Lions--Magenes space $H^{1/2}_{00}$ \cite{LionsMagenes1972}. The odd extension supplies the required cancellation at $t=0$. We record the periodic Hilbert-valued form used below.

\begin{lemma}[Hilbert-valued odd Hardy inequality]\label{lem:hardy}
Let $\mathcal H$ be a real or complex Hilbert space, let $0<a<\pi/4$, and let $g\in H^{1/2}(\T;\mathcal H)$ be odd. There is an absolute constant $C$ such that
\begin{equation}
 \int_0^a\frac{\norm[\mathcal H]{g(t)}^2}{t}\dd t
 \leq C\norm[H^{1/2}(\T;\mathcal H)]{g}^2.
 \label{eq:odd-hardy}
\end{equation}
Consequently, if $h\in H^{1/2}(\T^2)$ is odd in $t$, then
\begin{equation}
 \int_0^a\frac{\norm[L^2_x]{h(\cdot,t)}^2}{t}\dd t
 \leq C\norm[H^{1/2}(\T^2)]{h}^2.
 \label{eq:odd-hardy-2d}
\end{equation}
\end{lemma}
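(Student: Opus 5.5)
We work directly in Fourier series. Since $g$ is odd, $g(t)=\sum_{\ell\ge1}2i\,\widehat g(\ell)\sin(\ell t)$ with $\widehat g(-\ell)=-\widehat g(\ell)$ and $\widehat g(0)=0$, and
$$
\norm[H^{1/2}]{g}^2=2\sum_{\ell\ge1}(1+\ell^2)^{1/2}\norm[\mathcal H]{\widehat g(\ell)}^2 .
$$
The plan is to split the frequencies at a dyadic scale adapted to $t$. For $0<t\le a$, write $g(t)=\mathrm{Lo}_t+\mathrm{Hi}_t$, where the low part collects $\ell\le 1/t$ and the high part collects $\ell>1/t$. The proof then has two halves.

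\textbf{Low frequencies.} We use $\abs{\sin(\ell t)}\le \ell t$. This gives $\norm{\mathrm{Lo}_t}\le 2t\sum_{\ell\le1/t}\ell\norm{\widehat g(\ell)}$, and this is exactly where oddness supplies the vanishing at $t=0$. By Cauchy--Schwarz with weights $\ell^{1/2}$,
$$
\norm{\mathrm{Lo}_t}^2\lesssim t^2\Bigl(\sum_{\ell\le1/t}\ell\Bigr)\Bigl(\sum_{\ell\le1/t}\ell\norm{\widehat g(\ell)}^2\Bigr)\lesssim \sum_{\ell\le 1/t}\ell\norm{\widehat g(\ell)}^2 .
$$
This bound alone is not integrable against $dt/t$. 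So instead we apply Cauchy--Schwarz with a decaying weight, or more simply we estimate $\int_0^a t^{-1}\norm{\mathrm{Lo}_t}^2\dd t$ by Schur's test. We bound $\norm{\mathrm{Lo}_t}\le 2\sum_{\ell\le1/t}(\ell t)\norm{\widehat g(\ell)}$. On dyadic blocks $t\sim 2^{-k}$ and $\ell\sim 2^{m}$ with $m\le k$, the kernel is $2^{m-k}$. The $dt/t$ measure of a block is $O(1)$, and the block $m$ of the $H^{1/2}$ energy is $\sum_{\ell\sim 2^m}\ell\norm{\widehat g(\ell)}^2$, while Cauchy--Schwarz within a block gives $\sum_{\ell\sim2^m}\norm{\widehat g(\ell)}\le 2^{m/2}\bigl(\sum_{\ell\sim 2^m}\norm{\widehat g(\ell)}^2\bigr)^{1/2}\lesssim \bigl(\sum_{\ell\sim 2^m}\ell\norm{\widehat g(\ell)}^2\bigr)^{1/2}$. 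The geometric decay $2^{-(k-m)}$ makes the discrete convolution bounded on $\ell^2$ by Young's inequality, giving $\lesssim\norm[H^{1/2}]{g}^2$.

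\textbf{High frequencies.} This is the step I expect to be the main obstacle. Pointwise Cauchy--Schwarz loses a logarithm, so we use orthogonality in $t$ instead. For $t\sim2^{-k}$ we split $\mathrm{Hi}_t$ into dyadic blocks $\ell\sim2^m$ with $m>k$. Then
$$
\int_{2^{-k}}^{2^{1-k}}\norm{\mathrm{Hi}_t}^2\frac{\dd t}{t}\le 2^{k}\int_{\T}\norm{\mathrm{Hi}^{(k)}(t)}^2\dd t\lesssim 2^k\sum_{\ell>2^k}\norm{\widehat g(\ell)}^2 ,
$$
where $\mathrm{Hi}^{(k)}$ is the fixed function $2i\sum_{\ell>2^k}\widehat g(\ell)\sin(\ell t)$. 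Since the cutoff depends only on $k$ and not on $t$, we may enlarge the integration to the whole circle and apply Parseval. Summing over $k$, each $\ell$ receives $\sum_{2^k<\ell}2^k\le 2\ell$. The total is therefore $\lesssim\sum_\ell \ell\norm{\widehat g(\ell)}^2$.

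To make the decomposition consistent, we fix the cutoff $2^k$ on each dyadic $t$-interval rather than $1/t$; the low-frequency analysis is unchanged up to constants. The constant is then absolute, and the restriction $a<\pi/4$ only ensures that $t$ ranges over finitely many dyadic blocks inside $(0,1)$. The Hilbert-valued setting causes no issue, because only Parseval and the triangle inequality in $\mathcal H$ are used.

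For \eqref{eq:odd-hardy-2d}, take $\mathcal H=L^2_x(\T)$ and $g(t)=h(\cdot,t)$, which is odd. Then
$$
\norm[H^{1/2}(\T;\mathcal H)]{g}^2=\sum_{n,\ell}(1+\ell^2)^{1/2}\abs{\widehat h(n,\ell)}^2\le\norm[H^{1/2}(\T^2)]{h}^2 ,
$$
and \eqref{eq:odd-hardy} gives the claim.
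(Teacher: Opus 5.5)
Your proof is correct, but it follows a different route from the paper's.

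The paper argues in physical space. It invokes the periodic Hilbert-valued Gagliardo characterization of $H^{1/2}$ and restricts the double integral to pairs $(t,-s)$ with $0<t,s<a$, so that oddness turns the integrand into $\|g(t)+g(s)\|^2/(t+s)^2$. It shows the mixed term is nonnegative via $(t+s)^{-2}=\int_0^\infty r\e^{-r(t+s)}\dd r$. It then reads the Hardy weight off the diagonal terms, $2\int_0^a\|g(t)\|^2\bigl(\frac1t-\frac1{t+a}\bigr)\dd t\ge\int_0^a\|g(t)\|^2\frac{\dd t}{t}$. Finally it removes the smoothness assumption by Fourier truncation and Fatou.

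You stay on the Fourier side, which is how the paper actually defines $H^s$. Oddness enters only through $|\sin(\ell t)|\le\ell t$ on the low frequencies, where the dyadic kernel $2^{m-k}$ and Young's inequality give summability. The high frequencies are handled by Parseval on each dyadic shell, with a $t$-independent cutoff, together with $\sum_{2^k<\ell}2^k\le2\ell$.

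Your route avoids both the Gagliardo equivalence (and its Hilbert-valued extension) and the density step. The low part is a trigonometric polynomial, and the high part is controlled in $L^2$ for every $g\in H^{1/2}$. It also works for any $a\le\pi$, not only $a<\pi/4$. The paper's route is shorter and explains the $1/t$ weight conceptually, as the diagonal part of the cross-sheet Gagliardo energy; this is exactly the $H^{1/2}_{00}$ mechanism.

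Two cosmetic points. First, $(0,a)$ contains infinitely many dyadic blocks, not finitely many; all that matters is that $k$ is bounded below. Second, the initial Cauchy--Schwarz attempt in your low-frequency paragraph can be deleted, since the dyadic Schur argument replaces it.
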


\begin{proof}
The periodic analogue of the standard scalar Gagliardo characterization \cite{DNPV2012} gives
$$
 \lvert g\rvert_{H^{1/2}(\T;\mathcal H)}^2
 \asymp
 \int_\T\int_\T
 \frac{\norm[\mathcal H]{g(r)-g(s)}^2}
      {d_\T(r,s)^2}\dd r\dd s,
$$
where $d_\T$ is the geodesic distance. The Hilbert-valued version follows by applying the scalar periodic identity to the coordinates in an orthonormal basis and using Tonelli's theorem. First suppose that $g$ is smooth. If $0<t,s<a<\pi/4$, then $d_\T(t,-s)=t+s$. Oddness gives $g(-s)=-g(s)$, so restricting the preceding double integral to the pairs $(t,-s)$ gives
\begin{equation}
 \norm[H^{1/2}]{g}^2\gtrsim
 \int_0^a\int_0^a
 \frac{\norm[\mathcal H]{g(t)+g(s)}^2}{(t+s)^2}\dd t\dd s.
 \label{eq:cross-gagliardo}
\end{equation}
The mixed quadratic form is nonnegative.  Indeed,
$$
 \frac1{(t+s)^2}=\int_0^\infty r\e^{-r(t+s)}\dd r
$$
and hence
$$
 \operatorname{Re}\int_0^a\int_0^a
 \frac{\ip{g(t)}{g(s)}_{\mathcal H}}{(t+s)^2}\dd t\dd s
 =\int_0^\infty r
   \norm[\mathcal H]{\int_0^a\e^{-rt}g(t)\dd t}^2\dd r\geq0.
$$
The two diagonal terms in \eqref{eq:cross-gagliardo} equal
$$
 2\int_0^a\norm[\mathcal H]{g(t)}^2
   \left(\frac1t-\frac1{t+a}\right)\dd t
 \geq \int_0^a\frac{\norm[\mathcal H]{g(t)}^2}{t}\dd t.
$$
This proves \eqref{eq:odd-hardy} for smooth odd functions.  For general $g$, take symmetric Fourier truncations $g_N$.  They remain odd and converge to $g$ in $H^{1/2}(\T;\mathcal H)$.  A subsequence converges almost everywhere, after restricting to the separable closed subspace generated by the Fourier coefficients of $g$.  Fatou's lemma gives
$$
 \int_0^a\frac{\norm[\mathcal H]{g(t)}^2}{t}\dd t
 \leq\liminf_N\int_0^a
       \frac{\norm[\mathcal H]{g_N(t)}^2}{t}\dd t
 \leq C\norm[H^{1/2}]{g}^2.
$$
Taking $\mathcal H=L^2(\T_x)$ proves \eqref{eq:odd-hardy-2d}, because $(1+\ell^2)^{1/2}\leq(1+n^2+\ell^2)^{1/2}$ makes the resulting $H_t^{1/2}(L_x^2)$ norm no larger than the full $H^{1/2}(\T^2)$ norm.
\end{proof}

\subsection{Compression of scalar decompositions}
\label{sec:compression}

Set
\begin{equation}
 I_j=[\lambda_j^{-1},2\lambda_j^{-1}],
 \qquad 1\leq j\leq K.
 \label{eq:slabs}
\end{equation}
The intervals are pairwise disjoint and, by the choice of $M$, lie in $(0,a)$. Choose an even real function $\chi\in C_c^\infty(\R)$ satisfying
$$
 0\leq\chi\leq1,
 \qquad \chi=1\ \hbox{on }[-1,1],
 \qquad \operatorname{supp}\chi\subset(-R/2,R/2),
$$
and define the tangential multiplier
$$
 \widehat{S_jF}(n)=\chi(n/\lambda_j)\widehat F(n).
$$
For $\lambda>0$ let
$$
 K_\lambda(x)=\sum_{n\in\Z}\chi(n/\lambda)\e^{inx}.
$$
This is a trigonometric polynomial, and $S_jF=K_{\lambda_j}*F$ defines $S_jF$ for every $F\in L^1(\T)$. In particular, $S_j$ is an $L^2$ contraction.

\begin{proposition}[Decomposition compression]\label{prop:compression}
Let $\Phi\in C^\infty(\T^2)$ be odd in $t$, and set
$$
 \Psi_j(x)=\frac1{|I_j|}\int_{I_j}\Phi(x,t)\dd t.
$$
Define
\begin{equation}
 E^2:=\sum_{j=1}^K\norm[L^2_x]{m_j-S_j\Psi_j}^2.
 \label{eq:profile-error}
\end{equation}
Then every decomposition $\Phi=w+h$ with $w\in W^{1,1}(\T^2)$ and $h\in H^{1/2}(\T^2)$ satisfies
\begin{equation}
 \norm[L^1]{\nabla w}+\norm[H^{1/2}]{h}
 \geq cK^{1/4}-C E.
 \label{eq:compression-lower}
\end{equation}
\end{proposition}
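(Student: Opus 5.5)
The plan is to reduce Proposition~\ref{prop:compression} to Lemma~\ref{lem:path}. Given a decomposition $\Phi=w+h$, I will build the path $f_j=S_jW_j$ with $W_j(x)=|I_j|^{-1}\int_{I_j}w(x,t)\dd t$. I then bound $A\lesssim\norm[L^1]{\nabla w}$ and $B\leq E+C\norm[H^{1/2}]{h}$, and Lemma~\ref{lem:path} gives $cK^{1/4}\leq A+B$, which rearranges to \eqref{eq:compression-lower} after renaming constants.

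\textbf{Step 0: symmetrize.} Since $\Phi$ is odd in $t$, replace $w,h$ by their odd parts $w_o(x,t)=\tfrac12(w(x,t)-w(x,-t))$ and $h_o$. Then $\Phi=w_o+h_o$. The reflection $t\mapsto-t$ preserves both norms, so $\norm[L^1]{\nabla w_o}\leq\norm[L^1]{\nabla w}$ and $\norm[H^{1/2}]{h_o}\leq\norm[H^{1/2}]{h}$. Hence we may assume $w$ and $h$ are odd, and Lemma~\ref{lem:hardy} applies to $h$. Note also that each $f_j$ is a trigonometric polynomial, because $S_j$ is convolution with $K_{\lambda_j}$; so $f_j\in L^2(\T)$ as Lemma~\ref{lem:path} requires.

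\textbf{Step 1: the variation $A$.} Write
$$
 f_{j+1}-f_j=S_{j+1}(W_{j+1}-W_j)+(S_{j+1}-S_j)W_j .
$$

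For the first term, $\norm[L^1]{K_\lambda}\lesssim1$ uniformly in $\lambda$. This follows from Poisson summation, since $K_\lambda$ is the periodization of $\lambda\check\chi(\lambda\cdot)$ and $\check\chi$ is Schwartz. Next, $W_{j+1}-W_j$ is a difference of averages of $w(\cdot,t)$ over $t$ in $J_j=[\lambda_{j+1}^{-1},2\lambda_j^{-1}]$. By the fundamental theorem of calculus in $t$ (valid for a.e.\ $x$ after approximation),
$$
 \norm[L^1_x]{W_{j+1}-W_j}\leq\int_{J_j}\norm[L^1_x]{\partial_tw(\cdot,t)}\dd t .
$$
The intervals $J_j$ overlap at most boundedly and lie in $(0,a)$. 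Summing in $j$ therefore gives a bound $\lesssim\norm[L^1(\T^2)]{\partial_tw}$, with Haar normalization only affecting constants.

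For the second term, the symbol $\chi(n/\lambda_{j+1})-\chi(n/\lambda_j)$ vanishes for $|n|\leq\lambda_j$, including $n=0$. So it factors as $T_j\partial_x$, where $T_j$ has symbol $(\chi(n/\lambda_{j+1})-\chi(n/\lambda_j))/(in)$. I will show $\norm[L^1\to L^1]{T_j}\lesssim\lambda_j^{-1}$, again by Poisson summation: the symbol is $\lambda_j^{-1}$ times a fixed-shape smooth compactly supported function at scale $\lambda_j$, because $R=4$ is fixed. Then
$$
 \norm[L^1]{(S_{j+1}-S_j)W_j}\lesssim\lambda_j^{-1}\norm[L^1]{\partial_xW_j}\leq\lambda_j^{-1}|I_j|^{-1}\int_{I_j}\norm[L^1_x]{\partial_xw}\dd t=\int_{I_j}\norm[L^1_x]{\partial_xw}\dd t .
$$
The slabs are disjoint, so summing gives $\lesssim\norm[L^1]{\partial_xw}$. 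Altogether $A\lesssim\norm[L^1]{\nabla w}$.

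\textbf{Step 2: the tracking error $B$.} Write $H_j$ for the slab average of $h$. Since $\Psi_j=W_j+H_j$,
$$
 m_j-f_j=(m_j-S_j\Psi_j)+S_jH_j .
$$
The first pieces square-sum to $E^2$. For the second, $S_j$ is an $L^2$ contraction, and Cauchy--Schwarz gives
$$
 \norm[L^2]{H_j}^2\leq|I_j|^{-1}\int_{I_j}\norm[L^2_x]{h(\cdot,t)}^2\dd t\leq2\int_{I_j}\norm[L^2_x]{h(\cdot,t)}^2\frac{\dd t}{t},
$$
using $t\leq2\lambda_j^{-1}=2|I_j|$ on $I_j$. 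The slabs are disjoint subsets of $(0,a)$, so the odd Hardy inequality \eqref{eq:odd-hardy-2d} yields $\sum_j\norm[L^2]{S_jH_j}^2\leq C\norm[H^{1/2}]{h}^2$. By Minkowski's inequality in $\ell^2$, $B\leq E+C\norm[H^{1/2}]{h}$.

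\textbf{Step 3: conclude.} Lemma~\ref{lem:path} gives
$$
 cK^{1/4}\leq A+B\leq C\bigl(\norm[L^1]{\nabla w}+\norm[H^{1/2}]{h}\bigr)+E ,
$$
which is \eqref{eq:compression-lower} after dividing by $C$.

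The main technical point I expect is the uniform $L^1$ kernel bounds in Step~1, in particular the $\lambda_j^{-1}$ gain for the differenced multiplier. These are standard via Poisson summation and the smoothness of $\chi$, but the scale bookkeeping must be done carefully.
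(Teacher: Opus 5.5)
Your proposal is correct and follows the paper's proof essentially step for step: odd symmetrization, the path $f_j=S_jW_j$, the same splitting of $f_{j+1}-f_j$ with the fundamental theorem of calculus over the hulls $J_j$, and Jensen plus the odd Hardy inequality for the tracking error. The only difference is in the $\lambda_j^{-1}$ gain for $(S_{j+1}-S_j)W_j$, which you get by factoring the multiplier as $T_j\partial_x$ with an $L^1$ kernel of size $O(\lambda_j^{-1})$; the paper instead uses the mean zero and first moment of $D_j=K_{\lambda_{j+1}}-K_{\lambda_j}$ together with the translation estimate, which is an equivalent argument.
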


\begin{proof}
The odd-part projection
$$
 \mathcal Of(x,t)=\tfrac12\bigl(f(x,t)-f(x,-t)\bigr)
$$
is contractive on $W^{1,1}$ and $H^{1/2}$.  We may therefore assume that $w$ and $h$ are odd.  Define
$$
 W_j=\frac1{|I_j|}\int_{I_j}w(\cdot,t)\dd t,
 \qquad
 H_j=\frac1{|I_j|}\int_{I_j}h(\cdot,t)\dd t,
 \qquad f_j=S_jW_j.
$$
Fubini gives $W_j\in L^1(\T)$.  Since $\chi$ is compactly supported, $S_jW_j$ has finitely many Fourier modes.  Thus $f_j\in L^2(\T)$, as required in Lemma \ref{lem:path}.

We first prove
\begin{equation}
 \sum_{j=1}^{K-1}\norm[L^1_x]{f_{j+1}-f_j}
 \leq C\norm[L^1(\T^2)]{\nabla w}.
 \label{eq:variation-path}
\end{equation}
Write
$$
 f_{j+1}-f_j
 =S_{j+1}(W_{j+1}-W_j)+(S_{j+1}-S_j)W_j.
$$
With
$$
 \check\chi(y)=\frac1{2\pi}\int_\R\chi(\xi)\e^{iy\xi}\dd\xi,
$$
Poisson summation (with the conventions of Section \ref{sec:construction}; see also \cite[Chapter~VI, \S~1]{Katznelson2004}) gives
$$
 K_\lambda(x)=2\pi\lambda\sum_{q\in\Z}
   \check\chi\bigl(\lambda(x+2\pi q)\bigr).
$$
Writing $d_\T(x,0)=|x|$ on $[-\pi,\pi]$, a change of variables gives
\begin{align}
 \int_\T|K_\lambda(x)|\,dm_1(x)
 &\leq\int_\R|\check\chi(y)|\dd y=:C_0,
 \label{eq:kernel-l1}\\
 \int_\T d_\T(x,0)|K_\lambda(x)|\,dm_1(x)
 &\leq\lambda^{-1}\int_\R|y\check\chi(y)|\dd y
 =:C_1\lambda^{-1}.
 \label{eq:kernel-moment}
\end{align}
For $D_j=K_{\lambda_{j+1}}-K_{\lambda_j}$, $\int_\T D_j\,dm_1=0$ and
\begin{equation}
 \int_\T d_\T(x,0)|D_j(x)|\,dm_1(x)
 \leq C\lambda_j^{-1}.
 \label{eq:difference-kernel-moment}
\end{equation}
The first term in the difference of $f_{j+1}$ and $f_j$ is controlled by the uniform $L^1$ bound for $K_\lambda$. If $t\in I_{j+1}$ and $s\in I_j$, then $t<s$, and the fundamental theorem of calculus gives
$$
 \norm[L^1_x]{w(\cdot,t)-w(\cdot,s)}
 \leq\int_t^s\norm[L^1_x]{\partial_rw(\cdot,r)}\dd r.
$$
Averaging this inequality gives
\begin{align*}
 \norm[L^1_x]{W_{j+1}-W_j}
 &\leq\frac1{|I_j||I_{j+1}|}
   \int_{s\in I_j}\int_{t\in I_{j+1}}\int_t^s
   \norm[L^1_x]{\partial_rw(\cdot,r)}\dd r\dd t\dd s\\
 &\leq \int_{J_j}\norm[L^1_x]{\partial_rw(\cdot,r)}\dd r,
\end{align*}
where $J_j$ is the convex hull of $I_j\cup I_{j+1}$. Since $R=4$, every point belongs to at most two of the intervals $J_j$. For the second term, cancellation of $D_j$, the translation estimate $\norm[L^1]{\tau_yW_j-W_j}\leq d_\T(y,0)\norm[L^1]{\partial_xW_j}$, and \eqref{eq:difference-kernel-moment} give
\begin{align*}
 \norm[L^1_x]{(S_{j+1}-S_j)W_j}
 &\leq C\lambda_j^{-1}\norm[L^1_x]{\partial_xW_j}\\
 &\leq C\lambda_j^{-1}|I_j|^{-1}
       \int_{I_j}\norm[L^1_x]{\partial_xw(\cdot,t)}\dd t.
\end{align*}
Since $|I_j|=\lambda_j^{-1}$ and the $I_j$ are disjoint, summing proves \eqref{eq:variation-path}.

Jensen's inequality gives
$$
 \norm[L^2_x]{H_j}^2\leq |I_j|^{-1}\int_{I_j}\norm[L^2_x]{h(\cdot,t)}^2\dd t.
$$
Because $|I_j|^{-1}=\lambda_j\leq2/t$ on $I_j$ and the intervals $I_j$ are disjoint, Lemma \ref{lem:hardy} yields
\begin{equation}
 \sum_{j=1}^K\norm[L^2_x]{H_j}^2
 \leq2\int_0^a\frac{\norm[L^2_x]{h(\cdot,t)}^2}{t}\dd t
 \leq C\norm[H^{1/2}]{h}^2.
 \label{eq:average-hardy}
\end{equation}
Since $\Psi_j=W_j+H_j$ and $S_j$ is an $L^2$ contraction,
\begin{align}
 \left(\sum_{j=1}^K\norm[L^2_x]{m_j-f_j}^2\right)^{1/2}
 &\leq E+
   \left(\sum_{j=1}^K\norm[L^2_x]{S_jH_j}^2\right)^{1/2}\notag\\
 &\leq E+C\norm[H^{1/2}]{h}.
 \label{eq:path-B-bound}
\end{align}
Lemma \ref{lem:path}, together with \eqref{eq:variation-path} and \eqref{eq:path-B-bound}, proves \eqref{eq:compression-lower}.
\end{proof}

\section{The normal-sheet profile}
\label{sec:profile}

The next proposition identifies the lacunary path encoded by the construction of Section \ref{sec:construction}.

\begin{proposition}[Profile estimate]\label{prop:profile}
Let
$$
 \Psi_j(x)=\frac1{|I_j|}\int_{I_j}\Phi_K(x,t)\dd t.
$$
Then
\begin{equation}
 \sum_{j=1}^K\norm[L^2_x]{m_j-S_j\Psi_j}^2\leq C,
 \label{eq:profile-uniform}
\end{equation}
uniformly in $K$.
\end{proposition}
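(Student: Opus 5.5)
The plan is to compute $\Phi_K$ mode by mode in $x$, so that everything reduces to one-dimensional profiles in $t$. By \eqref{eq:PhiK-fourier}, $\widehat b_K$ is supported on $\pm\lambda_i$, where it equals $1/(2\sqrt K)$. Hence
$$
 \Phi_K(x,t)=K^{-1/2}\sum_{i=1}^K\cos(\lambda_i x)\,\phi_{\lambda_i}(t),
 \qquad
 \phi_n=G_n*_t\rho_{\varepsilon_K},
$$
where $G_n$ is the periodic function with coefficients $\widehat{G_n}(\ell)=-i\ell/\bigl(\pi(n^2+\ell^2)\bigr)$. Using the classical series $2\sum_{\ell\ge1}\ell\sin(\ell t)/(n^2+\ell^2)=\pi\sinh(n(\pi-t))/\sinh(n\pi)$ for $0<t<2\pi$, I would identify $G_n$ as the odd function with $G_n(t)=A_n(t)=\sinh(n(\pi-t))/\sinh(n\pi)$ on $(0,\pi)$. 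This normalization check, that the ideal sheet produces boundary value exactly $A_n(0+)=1$, is the step I expect to need the most care. Everything else is elementary once it is in place.

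Next I use the multiplier $S_j$. It acts on $\cos(\lambda_i x)$ by the factor $\chi(\lambda_i/\lambda_j)$. For $i\le j$ this factor is $1$, since $\lambda_i/\lambda_j\le1$. For $i\ge j+1$ it is $0$, since $\lambda_i/\lambda_j\ge R=4>R/2$. So the cutoff is exact and
$$
 m_j-S_j\Psi_j=K^{-1/2}\sum_{i\le j}\bigl(1-\alpha_{ij}\bigr)\cos(\lambda_i x),
 \qquad
 \alpha_{ij}=\frac1{|I_j|}\int_{I_j}\phi_{\lambda_i}(t)\dd t .
$$
By orthogonality,
$$
 \norm[L^2_x]{m_j-S_j\Psi_j}^2=\frac1{2K}\sum_{i\le j}\abs{1-\alpha_{ij}}^2 .
$$

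It remains to bound $1-\phi_n(t)$ for $n=\lambda_i$, $i\le j$, and $t\in I_j$. Since $t\ge\lambda_j^{-1}\ge\lambda_K^{-1}\gg\varepsilon_K$, the support of $\rho_{\varepsilon_K}$ stays on the positive side of the sheet. There $\phi_n(t)$ is an average of $A_n(t-s)$ over $|s|<\varepsilon_K$ with weight of total mass one. Because $|A_n'|\lesssim n$ on $(0,a]$, this gives $|\phi_n(t)-A_n(t)|\lesssim n\varepsilon_K$.

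I also need a lower bound for $A_n$. From $A_n(t)\ge e^{-nt}\bigl(1-e^{-2n(\pi-t)}\bigr)$ and $A_n\le1$ on $[0,a]$, I get
$$
 0\le1-A_n(t)\le nt+e^{-n}.
$$
Combining the two estimates, with $t\le 2\lambda_j^{-1}$ and $n=\lambda_i$,
$$
 \abs{1-\alpha_{ij}}\lesssim 4^{\,i-j}+e^{-\lambda_i}+\lambda_i\varepsilon_K,
 \qquad
 \lambda_i\varepsilon_K\le K^{-2}.
$$

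Finally I sum over $j$. The three contributions to $\sum_j\norm[L^2_x]{m_j-S_j\Psi_j}^2$ are bounded as follows.
\begin{itemize}
\item The geometric term gives $K^{-1}\sum_j\sum_{i\le j}16^{\,i-j}\lesssim1$.
\item The exponential term gives $K^{-1}\sum_j\sum_i e^{-2\lambda_i}\lesssim1$.
\item The mollification term gives $K^{-1}\cdot K^2\cdot K^{-4}\le1$.
\end{itemize}
Together these prove \eqref{eq:profile-uniform} uniformly in $K$.
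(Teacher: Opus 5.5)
Your proposal is correct and follows essentially the same route as the paper: $S_j$ acts exactly on the lacunary modes, orthogonality gives $\frac1{2K}\sum_{i\le j}|1-\alpha_{ij}|^2$, and the bound $|1-\alpha_{ij}|\lesssim\lambda_i/\lambda_j+\lambda_i\varepsilon_K$ comes from the explicit profile $A_n$ together with the fact that the mollifier does not cross the sheet on any $I_j$. The differences are cosmetic. You check the normalization $A_n(0+)=1$ through the classical sine series rather than through the derivative jump of the Green function. Your extra $e^{-n}$ term is unnecessary, since $|A_n'|\le n\coth\pi$ and $A_n(0+)=1$ already give $|1-A_n(t)|\le Cnt$ by the mean value theorem.
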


\begin{proof}
We first replace the normal mollifier by an ideal sheet.  Let $\mu_K=(2\pi)^{-1}b_K(x)e_t\delta_0$ and $\Phi_{K,0}=2\Delta^{-1}\operatorname{div}\mu_K$.  Under the convention $\Delta\e^{i(nx+\ell t)}=-(n^2+\ell^2)\e^{i(nx+\ell t)}$, the periodic Green function for $\partial_t^2-n^2$ is
\begin{equation}
 \mathcal G_n(t)
 =-\sum_{\ell\in\Z}\frac{\e^{i\ell t}}{n^2+\ell^2}
 =-\frac{\pi\cosh(n(\pi-|t|))}{n\sinh(n\pi)},
 \qquad (\partial_t^2-n^2)\mathcal G_n=\delta_0.
 \label{eq:normalized-green}
\end{equation}
The coordinate derivative of $\mathcal G_n$ has jump $2\pi$ at zero. Indeed, when functions are embedded using $dm_1=dt/(2\pi)$, a coordinate derivative jump $J$ contributes $(J/2\pi)\delta_0$.  Thus, for $0<t<\pi$,
\begin{equation}
 2\frac1{2\pi}\mathcal G_n'(t)
 =\frac{\sinh(n(\pi-t))}{\sinh(n\pi)}=:A_n(t).
 \label{eq:green-normalization}
\end{equation}
Let $A_n$ denote the odd, $2\pi$-periodic function whose restriction to $(0,\pi)$ is the right-hand side of \eqref{eq:green-normalization}. Consequently
\begin{equation}
 \Phi_{K,0}(x,t)=K^{-1/2}\sum_{i=1}^K
 A_{\lambda_i}(t)\cos(\lambda_i x).
 \label{eq:sheet-profile}
\end{equation}
In particular, $A_n(0+)=1$. Differentiation gives, for $0<t<\pi$ and $n\geq1$,
$$
 \abs{A_n'(t)}
 =n\frac{\cosh(n(\pi-t))}{\sinh(n\pi)}
 \leq n\coth(n\pi)\leq n\coth\pi.
$$
The mean value theorem therefore gives, uniformly for $0<t<\pi$ and $n\geq1$,
\begin{equation}
 \abs{1-A_n(t)}\leq Cnt.
 \label{eq:profile-Lipschitz}
\end{equation}

By \eqref{eq:convolution-convention}, $a_K=\mu_K*_t\rho_{\varepsilon_K}$. Hence $\Phi_K$ is the Haar convolution of $\Phi_{K,0}$ with $\rho_{\varepsilon_K}$ in the $t$ variable. Since $K\geq16$, one has $\varepsilon_K<\tfrac12\lambda_K^{-1}$. Thus, for $t\in I_j$ and for the representative $s\in(-\varepsilon_K,\varepsilon_K)$ of a point in $\operatorname{supp}\rho_{\varepsilon_K}$,
$$
 0<t-s<3R^{-(M+1)}<a.
$$
In particular, the convolution does not cross the sheet on any $I_j$, and \eqref{eq:profile-Lipschitz} applies throughout the convolution window. Define
$$
 \alpha_{ij}
 :=\frac1{|I_j|}\int_{I_j}
   (A_{\lambda_i}*_t\rho_{\varepsilon_K})(t)\dd t.
$$
Then, for $i\leq j$,
\begin{equation}
 \abs{1-\alpha_{ij}}
 \leq C\bigl(\lambda_i\lambda_j^{-1}
              +\varepsilon_K\lambda_i\bigr).
 \label{eq:alpha-error}
\end{equation}
The multiplier $S_j$ fixes $\cos(\lambda_i x)$ for $i\leq j$ and annihilates it for $i\geq j+1$.  Orthogonality therefore gives
\begin{align*}
 \sum_{j=1}^K\norm[L^2_x]{m_j-S_j\Psi_j}^2
 &=\frac1{2K}\sum_{j=1}^K\sum_{i=1}^j
     \abs{1-\alpha_{ij}}^2\\
 &\leq \frac{C}{K}\sum_{j=1}^K\sum_{i=1}^jR^{-2(j-i)}
   +\frac{C\varepsilon_K^2}{K}
       \sum_{i=1}^K(K-i+1)\lambda_i^2.
\end{align*}
The first term is bounded by a geometric series, while
$$
 \sum_{i=1}^K(K-i+1)\lambda_i^2\leq C\lambda_K^2
$$
and $\varepsilon_K\lambda_K=K^{-2}$.  This proves \eqref{eq:profile-uniform}.
\end{proof}

\section{Completion of the two-dimensional proof}
\label{sec:two-dimensional-proof}

\subsection{The scalar lower bound}

Apply Proposition \ref{prop:compression} to $\Phi_K$.  Proposition \ref{prop:profile} makes the error $E$ uniformly bounded, so every decomposition $\Phi_K=w+h$ satisfies
$$
 \norm[L^1]{\nabla w}+\norm[H^{1/2}]{h}
 \geq cK^{1/4}-C.
$$
Since $\norm[W^{1,1}]{w}\geq\norm[L^1]{\nabla w}$, taking the infimum over all decompositions yields
\begin{equation}
 \norm[H^{1/2}(\T^2)+W^{1,1}(\T^2)]{\Phi_K}
 \geq cK^{1/4}-C.
 \label{eq:left-lower-2d}
\end{equation}

\subsection{The ambient upper bound}

It remains to estimate the decomposition in \eqref{eq:ambient-decomp-preview}.  Orthogonality gives $\norm[L^2_x]{b_K}^2=1/2$; hence Cauchy--Schwarz and the normalization of $\rho_{\varepsilon_K}$ imply
\begin{equation}
 \norm[L^1(\T^2)]{a_K}
 =\frac1{2\pi}\norm[L^1(\T)]{b_K}
 \leq\frac1{2\pi}\norm[L^2(\T)]{b_K}\leq C.
 \label{eq:a-l1}
\end{equation}
For a frequency $(n,\ell)$ with $n\neq0$, the symbol \eqref{eq:hodge-symbol} gives
\begin{equation}
 \abs{Q(n,\ell)e_t}^2=\frac{n^2}{n^2+\ell^2}.
 \label{eq:normal-Q}
\end{equation}
Moreover, $|\widehat\rho_{\varepsilon_K}(\ell)|\leq1$.  Absorbing the fixed factor $(2\pi)^{-1}$ in the constant, we obtain
\begin{equation}
 \norm[H^{-1/2}]{Qa_K}^2
 \leq C\sum_{n\ne0}\abs{\widehat b_K(n)}^2
    \sum_{\ell\in\Z}
      \frac{n^2}{(n^2+\ell^2)^{3/2}}.
 \label{eq:Q-bound-start}
\end{equation}
The inner sum is bounded uniformly for integers $|n|\geq1$: the term $\ell=0$ is at most one, the terms $1\leq|\ell|\leq|n|$ contribute $O(1)$, and the tail is bounded by $Cn^2\sum_{\ell>|n|}\ell^{-3}=O(1)$.  Parseval gives
\begin{equation}
 \norm[H^{-1/2}]{Qa_K}\leq C.
 \label{eq:Q-bound}
\end{equation}
Using \eqref{eq:ambient-decomp-preview} in the sum norm, together with \eqref{eq:a-l1} and \eqref{eq:Q-bound}, gives
\begin{equation}
 \norm[H^{-1/2}(\T^2)+L^1(\T^2)]{\nabla\Phi_K}
 \leq2\norm[L^1]{a_K}+2\norm[H^{-1/2}]{Qa_K}\leq C.
 \label{eq:right-upper-2d}
\end{equation}
Equations \eqref{eq:left-lower-2d} and \eqref{eq:right-upper-2d} prove Theorem \ref{thm:main} in dimension two.

\section{Higher dimensions}
\label{sec:higher-dimensions}

For $N>2$, extend the two-dimensional example by
$$
 \widetilde\Phi_K(x_1,x'',x_N)=\Phi_K(x_1,x_N),
 \qquad x''\in\T^{N-2}.
$$
Also set
$$
 \widetilde a_K(x_1,x'',x_N)
 =\frac1{2\pi}b_K(x_1)e_N\rho_{\varepsilon_K}(x_N).
$$
Under normalized Haar measure, the $L^1$ and Sobolev norms of fields independent of $x''$ agree with their two-dimensional norms. On frequencies $(n,0,\ldots,0,\ell)$, the $N$-dimensional Hodge symbol restricted to the $(x_1,x_N)$ plane is the two-dimensional symbol. Consequently,
$$
 P_N\widetilde a_K=\widetilde{P_2a_K},
 \qquad
 \nabla\widetilde\Phi_K=2P_N\widetilde a_K,
$$
where the tilde inserts zero components in the $x''$ directions. Hence \eqref{eq:right-upper-2d} remains valid.

For the lower bound, let
$$
 \mathcal AF(x_1,x_N)=\int_{\T^{N-2}}
 F(x_1,x'',x_N)\,dm_{N-2}(x'').
$$
Jensen's inequality and Fubini give
$$
 \norm[W^{1,1}(\T^2)]{\mathcal AW}
 \leq\norm[W^{1,1}(\T^N)]{W},
$$
while projection onto the Fourier modes with zero $x''$ frequency gives
$$
 \norm[H^{1/2}(\T^2)]{\mathcal AH}
 \leq\norm[H^{1/2}(\T^N)]{H}.
$$
Every decomposition $\widetilde\Phi_K=W+H$ therefore averages to a two-dimensional decomposition $\Phi_K=\mathcal AW+\mathcal AH$ of no greater cost.  Taking infima and applying \eqref{eq:left-lower-2d} proves \eqref{eq:lhs-main} for every $N\geq2$, completing the proof of Theorem \ref{thm:main}.
The same two-dimensional constants work for every $N\geq2$.

\section*{Declaration of generative AI and AI-assisted technologies in the manuscript preparation process}

During the preparation of this work, the author used OpenAI ChatGPT, including the GPT-5.6 Sol model, to assist with exploratory calculations, proof organization, consistency checks, literature searches, and language editing. The author independently verified the mathematical arguments and computations, reviewed and edited the AI-assisted material, and takes full responsibility for the content of the article.


\begin{thebibliography}{99}

\bibitem{Arsenio2026} D. Ars\'enio,
\newblock The trace-free Beurling--Ahlfors transform and the Bourgain--Brezis problem for Hodge systems,
\newblock \emph{arXiv:2608.04237v1} (2026).

\bibitem{BerghLofstrom1976} J. Bergh and J. L\"ofstr\"om,
\newblock \emph{Interpolation Spaces: An Introduction},
\newblock Grundlehren der mathematischen Wissenschaften, vol.~223, Springer, Berlin, 1976.

\bibitem{BB03} J. Bourgain and H. Brezis,
\newblock On the equation $\operatorname{div}Y=f$ and application to control of phases,
\newblock \emph{J. Amer. Math. Soc.} \textbf{16} (2003), no.~2, 393--426,
\newblock doi:10.1090/S0894-0347-02-00411-3.

\bibitem{BB04} J. Bourgain and H. Brezis,
\newblock New estimates for the Laplacian, the div--curl, and related Hodge systems,
\newblock \emph{C. R. Math. Acad. Sci. Paris} \textbf{338} (2004), no.~7, 539--543,
\newblock doi:10.1016/j.crma.2003.12.031.

\bibitem{BB07} J. Bourgain and H. Brezis,
\newblock New estimates for elliptic equations and Hodge type systems,
\newblock \emph{J. Eur. Math. Soc.} \textbf{9} (2007), no.~2, 277--315,
\newblock doi:10.4171/JEMS/80.

\bibitem{BBM00} J. Bourgain, H. Brezis and P. Mironescu,
\newblock Lifting in Sobolev spaces,
\newblock \emph{J. Anal. Math.} \textbf{80} (2000), 37--86,
\newblock doi:10.1007/BF02791533.

\bibitem{BBM04} J. Bourgain, H. Brezis and P. Mironescu,
\newblock $H^{1/2}$ maps with values into the circle: minimal connections, lifting, and the Ginzburg--Landau equation,
\newblock \emph{Publ. Math. Inst. Hautes \'Etudes Sci.} \textbf{99} (2004), 1--115,
\newblock doi:10.1007/s10240-004-0019-5.

\bibitem{BMR13} P. Bousquet, P. Mironescu and E. Russ,
\newblock A limiting case for the divergence equation,
\newblock \emph{Math. Z.} \textbf{274} (2013), no.~1--2, 427--460,
\newblock doi:10.1007/s00209-012-1077-x.

\bibitem{BRWY19} P. Bousquet, E. Russ, Y. Wang and P.-L. Yung,
\newblock Approximation in higher-order Sobolev spaces and Hodge systems,
\newblock \emph{J. Funct. Anal.} \textbf{276} (2019), no.~5, 1430--1478,
\newblock doi:10.1016/j.jfa.2018.08.003.

\bibitem{Brezis2023} H. Brezis,
\newblock Some of my favorite open problems,
\newblock \emph{Rend. Lincei Mat. Appl.} \textbf{34} (2023), no.~2, 307--335,
\newblock doi:10.4171/RLM/1008.

\bibitem{BM21} H. Brezis and P. Mironescu,
\newblock \emph{Sobolev Maps to the Circle: From the Perspective of Analysis, Geometry, and Topology},
\newblock Progress in Nonlinear Differential Equations and Their Applications, vol.~96, Birkh\"auser/Springer, New York, 2021,
\newblock doi:10.1007/978-1-0716-1512-6.

\bibitem{Curca2025} E. Curc\u{a},
\newblock Pathological sums of Sobolev spaces and some counterexamples,
\newblock \emph{Math. Z.} \textbf{311} (2025), article 8,
\newblock doi:10.1007/s00209-025-03802-x.

\bibitem{Curca2026} E. Curc\u{a},
\newblock Bourgain--Brezis spaces obtained by real interpolation,
\newblock \emph{arXiv:2604.23158v1} (2026).

\bibitem{DLRW2021} F. Da Lio, T. Rivi\`ere and J. Wettstein,
\newblock Bergman--Bourgain--Brezis-type inequality,
\newblock \emph{J. Funct. Anal.} \textbf{281} (2021), no.~9, article 109201,
\newblock doi:10.1016/j.jfa.2021.109201.

\bibitem{DaiEtAl2026} X. Dai, W. Deng, Y. Shi, T. Wu and Y. Yang,
\newblock Bounded representatives in critical Sobolev--Hodge spaces,
\newblock \emph{arXiv:2608.07845v1} (2026).

\bibitem{DNPV2012} E. Di Nezza, G. Palatucci and E. Valdinoci,
\newblock Hitchhiker's guide to the fractional Sobolev spaces,
\newblock \emph{Bull. Sci. Math.} \textbf{136} (2012), no.~5, 521--573,
\newblock doi:10.1016/j.bulsci.2011.12.004.

\bibitem{Katznelson2004} Y. Katznelson,
\newblock \emph{An Introduction to Harmonic Analysis}, third ed.,
\newblock Cambridge Mathematical Library, Cambridge University Press, Cambridge, 2004.

\bibitem{LionsMagenes1972} J.-L. Lions and E. Magenes,
\newblock \emph{Non-Homogeneous Boundary Value Problems and Applications}, vol.~I,
\newblock Grundlehren der mathematischen Wissenschaften, vol.~181, Springer-Verlag, Berlin, 1972,
\newblock doi:10.1007/978-3-642-65161-8.

\bibitem{Mazya2007} V. Maz'ya,
\newblock Bourgain--Brezis type inequality with explicit constants,
\newblock in L. De Carli and M. Milman (eds.), \emph{Interpolation Theory and Applications},
\newblock Contemp. Math., vol.~445, Amer. Math. Soc., Providence, RI, 2007, 247--252,
\newblock doi:10.1090/conm/445/08605.

\bibitem{Mironescu2010} P. Mironescu,
\newblock On some inequalities of Bourgain, Brezis, Maz'ya, and Shaposhnikova related to $L^1$ vector fields,
\newblock \emph{C. R. Math. Acad. Sci. Paris} \textbf{348} (2010), no.~9--10, 513--515,
\newblock doi:10.1016/j.crma.2010.03.019.

\bibitem{VanSchaftingen2014} J. Van Schaftingen,
\newblock Limiting Bourgain--Brezis estimates for systems of linear differential equations: theme and variations,
\newblock \emph{J. Fixed Point Theory Appl.} \textbf{15} (2014), no.~2, 273--297,
\newblock doi:10.1007/s11784-014-0177-0.

\bibitem{Zygmund2002} A. Zygmund,
\newblock \emph{Trigonometric Series}, vols.~I and II, third ed.,
\newblock Cambridge Mathematical Library, Cambridge University Press, Cambridge, 2002.

\end{thebibliography}
\end{document}